\newtheorem{prop}{Proposition}[section]
\newtheorem{thm}[prop]{Theorem}
\newtheorem{lem}[prop]{Lemma}
\newtheorem{cor}[prop]{Corollary}
\theoremstyle{definition}
\newtheorem{defn}[prop]{Definition}
\newtheorem{ex}[prop]{Example}
\newtheorem{rem}[prop]{Remark}
\newtheorem*{ack}{Acknowledgements}
\def\co{\colon\thinspace}
\newcommand{\alphast}{\alpha_{\mathrm{st}}}
\newcommand{\C}{\mathbb{C}}
\newcommand{\Ca}{C_{\mathrm{aff}}}
\newcommand{\CP}{\mathbb{C}\mathrm{P}}
\newcommand{\rmd}{\mathrm{d}}
\newcommand{\rme}{\mathrm{e}}
\newcommand{\rmi}{\mathrm{i}}
\newcommand{\N}{\mathbb{N}}
\newcommand{\omegast}{\omega_{\mathrm{st}}}
\newcommand{\R}{\mathbb{R}}
\newcommand{\Rst}{R_{\mathrm{st}}}
\newcommand{\oSigma}{\overline{\Sigma}}
\newcommand{\ox}{\overline{x}}
\newcommand{\oy}{\overline{y}}
\newcommand{\oz}{\overline{z}}
\newcommand{\Z}{\mathbb{Z}}
\DeclareMathOperator{\Int}{Int}
\begin{document}

\author[P.~Albers]{Peter Albers}
\address{Mathematisches Institut, Universit\"at Heidelberg,
Im Neuenheimer Feld 205, 69120 Heidelberg, Germany}
\email{palbers@mathi.uni-heidelberg.de}

\author[H.~Geiges]{Hansj\"org Geiges}
\address{Mathematisches Institut, Universit\"at zu K\"oln,
Weyertal 86--90, 50931 K\"oln, Germany}
\email{geiges@math.uni-koeln.de}

\author[K.~Zehmisch]{Kai Zehmisch}
\address{Mathematisches Institut, Justus-Liebig-Universit\"at Gie{\ss}en,
Arndtstra{\ss}e 2, 35392 Gie{\ss}en, Germany}
\email{kai.zehmisch@math.uni-giessen.de}

\title[Symplectic dynamics and the degree--genus formula]{A symplectic
dynamics proof of the\\degree--genus formula}

\thanks{This research is part of a project
in the SFB/TRR 191 `Symplectic Structures in Geometry, Algebra and Dynamics',
funded by the DFG}

\date{}

\begin{abstract}
We classify global surfaces of section for the Reeb flow of
the standard contact form on the
$3$-sphere, defining the Hopf fibration. As an application,
we prove the degree-genus formula for complex projective curves, using
an elementary degeneration process inspired by the language of
holomorphic buildings in symplectic field theory.
\end{abstract}

\keywords{}

\subjclass[2010]{37J05; 14H50, 32Q65, 53D35}

\maketitle


\section{Introduction}
A \emph{global surface of section} for the flow of
a non-singular vector field $X$ on a three-manifold $M$
is an embedded compact surface $\Sigma\subset M$ such that
\begin{enumerate}
\item[(i)] the boundary $\partial\Sigma$ is a union of orbits;
\item[(ii)] the interior $\Int(\Sigma)$ is transverse to~$X$;
\item[(iii)] the orbit of $X$ through any point in $M\setminus\partial\Sigma$
intersects $\Int(\Sigma)$ in forward and backward time.
\end{enumerate}

If one can find such a global surface of section, understanding
the dynamics of $X$ essentially reduces to studying the Poincar\'e
return map $\Int(\Sigma)\rightarrow\Int(\Sigma)$,
which sends each point $p\in\Int(\Sigma)$ to the first intersection point
of the $X$-orbit through $p$ with $\Int(\Sigma)$ in forward time.

In symplectic dynamics, where $X$ is a Hamiltonian or Reeb vector field,
there are a number of results on the existence or non-existence of
global surfaces of section, e.g.\
\cite{hwz98,hwz03,hms15,hrsa11,hrsa18,koer19}. Conversely, one can
ask for the existence of flows with a given surface of section
and return map. For instance, in \cite{agz18} we describe
a construction of Reeb flows on the $3$-sphere $S^3$ with a disc-like global
surface of section, where the return map is a pseudorotation; see
also~\cite{abhs18,hutc16}.

For Reeb flows on the $3$-sphere coming from contact forms that define
the standard tight contact structure, the following are the main facts known
about the existence of global surfaces of section. Hofer, Wysocki and Zehnder
\cite[Theorem~1.3]{hwz98} give a sufficient criterion (dynamical convexity)
for the existence of a disc-like global surface of section.
Hryniewicz and Salom\~ao~\cite[Theorem~1.3]{hrsa11} describe a necessary
and sufficient condition for a periodic Reeb orbit of a non-degenerate
contact form to bound a disc-like global surface of section. A Reeb flow
without a disc-like global surface of section has been constructed
by van Koert~\cite{koer19}. It is not known if there is
a Reeb flow (in the described class) without any global surface of section.

This motivates the question whether one can give a complete classification
of global surfaces of section for a given flow.
In the present paper, we consider the Hopf flow on the $3$-sphere
$S^3\subset\C^2$, that is, the flow
\begin{equation}
\label{eqn:hopf-flow}
\Psi_t\co (z_1,z_2)\longmapsto(\rme^{\rmi t}z_1,\rme^{\rmi t}z_2),
\;\;\; t\in\R,
\end{equation}
defining the Hopf fibration $S^3\rightarrow S^2$,
as well as the induced flows on the
lens space quotients $L(d,1)$ of~$S^3$. Our first main result, which is purely
topological, gives a classification, up to isotopy, of the surfaces that can
arise as global surfaces of section for these flows.

The second main result is a symplectic dynamics proof of the
classical degree-genus formula for complex projective curves.
This formula says that a non-singular complex algebraic curve of degree $d$
in the projective plane $\CP^2$ is topologically a connected,
closed, oriented surface of genus
\begin{equation}
\label{eqn:dg}
g=\frac{1}{2}(d-1)(d-2).
\end{equation}
Our proof uses degenerations of complex projective curves in the
spirit of Symplectic Field Theory (SFT).
Perhaps surprisingly, the SFT point of view elucidates why
(\ref{eqn:dg}) should be read as a sum $\sum_{k=1}^{d-2}k$.
For a given non-singular complex projective curve
of degree~$d$, we describe a $1$-dimensional family of curves
starting at the given one and converging to a holomorphic building
of height~$d$ in the sense of~\cite{behwz03}.
Each level in this holomorphic building has genus~$0$, and the gluing
of level $k+2$ to level $k+1$ contributes $k$ to the genus, $k=1,\ldots,
d-2$ (see Figure~\ref{figure:building}).
This may be regarded as a motivating
example for the degenerations studied in SFT.
We ought to point out that we do not use any actual results from SFT.

The `standard' proof of the degree-genus formula,
using branched coverings and the Riemann--Hurwitz formula,
can be found in~\cite[Chapter~4]{kirw92}; see also \cite[\S21]{prso97}
and~\cite[p.~219]{grha78}.

Here is an outline of the paper.
In Section~\ref{section:hopf-flow} we construct some examples
of surfaces of section for the Hopf flow. We show how certain equivalences
between Seifert invariants can be interpreted as modifications
of such surfaces.

In Section~\ref{section:lens} we relate global surfaces of section
for the Hopf flow on $S^3$ to those for the induced flow (which likewise
defines an $S^1$-fibration)
on the lens space quotients $L(d,1)$. We then classify $1$-sections
in $L(d,1)$, i.e.\ global surfaces of section that intersect each
fibre exactly once. The classification of
$d$-sections (Definition~\ref{defn:d-section})
for the Hopf flow on $S^3$ with all boundary
orbits traversed positively is achieved in Section~\ref{section:pos-d}.

In Section~\ref{section:curves} we discuss a number
of examples how algebraic curves in
$\CP^2$ give rise to global surfaces of section for the Hopf flow.
This allows one to determine the genus of these particular curves.

Finally, in Section~\ref{section:buildings} we prove the degree-genus
formula, using genericity properties of algebraic curves. We give one
proof directly from the classification of global surfaces of section
for the Hopf flow. The second, more instructive proof, uses
SFT type degenerations to interpret the degree-genus formula as
as a sum $\sum_{k=1}^{d-2} k$. Technical details of the SFT type
convergence are relegated to Section~\ref{section:convergence}.
\section{The Hopf flow}
\label{section:hopf-flow}
Our aim is to describe surfaces of section for the Hopf flow
(\ref{eqn:hopf-flow}) on the $3$-sphere~$S^3$. Thinking of $S^3$ as
the unit sphere in~$\R^4$, we can define a $1$-form $\alphast$
on $S^3$ by
\begin{equation}
\label{eqn:alphast}
\alphast=\bigl(x_1\,\rmd y_1-y_1\,\rmd x_1+
x_2\,\rmd y_2-y_2\,\rmd x_2\bigr)|_{TS^3}.
\end{equation}
This $1$-form is a contact form, in the sense that
$\alphast\wedge\rmd\alphast$ is a volume form;
$\alphast$ is called the standard contact form on~$S^3$.
The Reeb vector field $\Rst$
of this contact form is defined by $i_{\Rst}\rmd\alphast=0$ and
$\alphast(\Rst)=1$.
Here this means that
\[ \Rst=x_1\partial_{y_1}-y_1\partial_{x_1}+
x_2\partial_{y_2}-y_2\partial_{x_2},\]
which is the vector field giving rise to the Hopf flow.

From this interpretation of the Hopf flow as a Reeb flow, and the
contact condition $\alphast\wedge\rmd\alphast\neq 0$, we see that
the $2$-form $\rmd\alphast$ defines an exact area
form transverse to the flow of~$\Rst$, so any surface of section must have
non-empty boundary.

For more on the basic notions of contact geometry see~\cite{geig08}.
\subsection{$d$-Sections}
Let $\Sigma\subset S^3$ be a surface of section for the Hopf flow.
Then $\partial\Sigma$ is a collection of fibres of the
Hopf fibration $S^3\rightarrow S^2$ over a finite number
of points $p_1,\ldots,p_k\in S^2$. The interior of $\Sigma$
projects surjectively to the connected set $S^2\setminus\{p_1,\ldots, p_k\}$.
It follows that each fibre over this set intersects $\Int(\Sigma)$
in the same number of points, and $\Sigma$
is a $d$-section for some $d\in\N$, in the following sense.

\begin{defn}
\label{defn:d-section}
We call an embedded surface $\Sigma\subset S^3$ a
\emph{$d$-section} for the flow of $\Rst$ if every simple orbit of~$\Rst$
(i.e.\ every Hopf fibre) intersects $\Int(\Sigma)$ in exactly $d$ points or
is a component of~$\partial\Sigma$; the latter will be referred to
as \emph{boundary fibres}. We shall always orient $\Sigma$
such that the $\Rst$-flow intersects $\Sigma$ positively.
The $d$-section is said to be \emph{positive} if the
boundary orientation of $\partial\Sigma$ coincides with the $\Rst$-direction.
\end{defn}

In some examples we shall construct such $d$-sections by starting from
an honest multi-section of the Hopf fibration over $S^2$ with
a certain number of discs removed, and then extending it
to become tangent to the fibres over the centres of these discs,
by gluing in helicoidal surfaces.
\subsection{Examples of $d$-sections}
\label{subsection:ex-d}
We think of $S^3$ as being obtained by gluing two copies $V_1,V_2$
of the solid torus $S^1\times D^2$. Write $\mu_i$
for the meridian and $\lambda_i=S^1\times\{*\}$, with $*\in\partial D^2$,
for the standard longitude on $\partial V_i$. We shall use those same
symbols for any curve on $\partial V_i$ in the same isotopy class.
The gluing described by $\mu_1=\lambda_2$, $\lambda_1=\mu_2$
yields~$S^3$.

More intrinsically, if one thinks of $S^3$ as the unit sphere
in~$\C^2$, we can define $V_i$ as the solid torus given by
$\{|z_i|\leq\sqrt{2}/2\}$. The identification of $V_1$ with $S^1\times D^2$
is given by
\[ V_1=\Bigl\{\bigl(z,\sqrt{1-|z|^2}\,\rme^{\rmi\theta}\bigr)\co
|z|\leq\sqrt{2}/2,\, \theta\in\R/2\pi\Z\Bigr\}.\]
The soul of $V_1$ is
\[ C_1=\bigl\{ (0,\rme^{\rmi\theta})\co \theta\in\R/2\pi\Z\bigr\},\]
corresponding to $S^1\times\{0\}\subset S^1\times D^2$. The solid torus $V_2$
and its soul $C_2$ are defined analogously. The $\mu_i$ and $\lambda_i$ are
\[ \mu_1=\biggl\{\Bigl(\frac{\sqrt{2}}{2}\rme^{\rmi\theta},
\frac{\sqrt{2}}{2}\Bigr)\co\theta\in\R/2\pi\Z\biggr\}=\lambda_2\]
and
\[ \lambda_1=\biggl\{\Bigl(\frac{\sqrt{2}}{2},
\frac{\sqrt{2}}{2}\rme^{\rmi\theta}\Bigr)\co\theta\in\R/2\pi\Z\biggr\}=\mu_2.\]

The two souls $C_1,C_2$ form a positive Hopf link, i.e.\
the two unknots have linking number $+1$. The \emph{Hopf tori}
\[ T^2_r=\bigl\{(z_1,z_2)\in S^3\co |z_1|=r\bigr\},\;\;
r\in ]0,1[,\]
foliate the complement of $C_1,C_2$ in~$S^3$.

In these coordinates, the Hopf flow is simply the flow of $\partial_{\varphi_1}
+\partial_{\varphi_2}$, where $\varphi_i$ is the angular coordinate in the
$z_i$-plane.
The Hopf fibration is then made up of the souls $C_i=S^1\times\{0\}$
of the two solid tori and the $(1,1)$-curves on the Hopf tori,
i.e.\ curves in the class $h:=\mu_1+\lambda_1=\mu_2+\lambda_2$.
\subsubsection{A disc-like $1$-section}
\label{subsubsection:disc}
The disc
\[ \Bigl\{\bigl(\sqrt{1-r^2},r\rme^{\rmi\theta}\bigr)\co r\in[0,1],\;
\theta\in\R/2\pi\Z\Bigr\}
\subset S^3\]
bounded by $C_1$ is a positive $1$-section for the Hopf flow.

Alternatively, we may identify $V_1,V_2$ with solid tori
such that the Hopf fibres correspond to the $S^1$-fibres in $S^1\times D^2$,
so that the fibre class is now represented by $h=S^1\times\{*\}$;
this change in identification amounts to a Dehn twist of the solid
torus along a meridional disc.

The meridional disc in $V_2$ defines a $1$-section for the Hopf flow in
that solid torus. The boundary $\mu_2$ of this disc
is identified with $\lambda_1=h-\mu_1$ in $\partial V_1$. In
$V_1$ we have a helicoidal surface $A$ with oriented
boundary $C_1\sqcup -(h-\mu_1)$, see Figure~\ref{figure:helicoid}.
This annulus $A$ glues with the meridional disc in $V_2$ to form a
positive $1$-section for the Hopf flow.

\begin{figure}[h]
\labellist
\small\hair 2pt
\pinlabel $C_1$ [l] at 115 411
\pinlabel $-h+\mu_1$ [l] at 222 234
\endlabellist
\centering
\includegraphics[scale=.4]{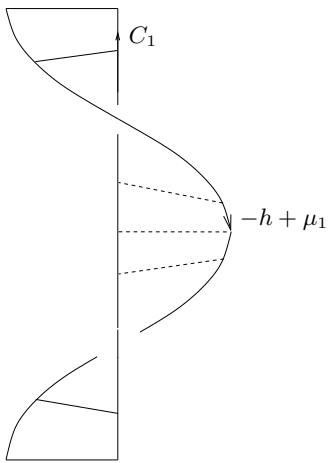}
  \caption{A helicoidal annulus.}
  \label{figure:helicoid}
\end{figure}
\subsubsection{An annular $2$-section}
\label{subsubsection:annular2}
In $V_1$ we find a helicoidal annulus $A_1$ with boundary
$\partial A_1=C_1\sqcup -(h-2\mu_1)$, with $\Int(A_1)$ intersecting each
Hopf fibre positively in two points. Likewise, we have such an annulus
$A_2$ in $V_2$ with $\partial A_2=C_2-(h-2\mu_2)$. Since
\[ h-2\mu_1=\lambda_1-\mu_1=-(\lambda_2-\mu_2)=-(h-2\mu_2),\]
$A_1$ and $A_2$ glue to form a positive annular $2$-section for the Hopf flow.
\subsection{The Euler number}
For the existence of positive $d$-sections, the sign of the
Euler number of the Hopf fibration is crucial.

\begin{lem}
The Hopf fibration, regarded as an $S^1$-bundle over~$S^2$, has
Euler number $e=-1$.
\end{lem}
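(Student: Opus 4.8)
The plan is to compute the Euler number as the self-intersection number of a section, or equivalently by counting the intersection of two generic sections of the bundle, keeping careful track of the sign conventions. The key point is that the Euler number of an oriented $S^1$-bundle over $S^2$ is determined by the framing with which a disc-section over one hemisphere is glued to a disc-section over the other; concretely, it equals the signed number of times a generic section crosses the zero section, and for the Hopf fibration this reduces to a linking number computation for the two souls $C_1,C_2$.

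First I would realise the two hemispheres of the base $S^2$ by the two solid tori $V_1,V_2$, whose souls $C_1,C_2$ are Hopf fibres projecting to the two poles. Restricting the bundle over each hemisphere, it is trivial, and a section over the $V_1$-hemisphere is given by a surface transverse to the fibres, for instance (a perturbation of) the meridional disc together with the helicoidal piece used in Section~\ref{subsubsection:disc}; similarly over the $V_2$-hemisphere. The Euler number is then the self-intersection of the soul, computed as the linking number of $C_2$ with a parallel copy of $C_1$ pushed off along such a section. Since $C_1,C_2$ form a \emph{positive} Hopf link with linking number $+1$, I expect the orientation bookkeeping to flip exactly one sign and yield $e=-1$.

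Alternatively, and perhaps more cleanly, I would exploit the fact that the disc bounded by $C_1$ in Section~\ref{subsubsection:disc} is a positive $1$-section: a positive $1$-section is precisely a section of the $S^1$-bundle, and its boundary framing relative to the fibre framing reads off the Euler number. Concretely, the boundary of the helicoidal annulus $A$ is $C_1\sqcup -(h-\mu_1)$, and comparing the longitude $\lambda_1=h-\mu_1$ determined by this section against the fibre class $h$ on $\partial V_1$ gives the self-intersection, hence $e$.

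The main obstacle will be pinning down the sign. The magnitude $|e|=1$ follows immediately from the Hopf link having linking number $\pm 1$, but the symplectic dynamics application needs the sign $e=-1$, which hinges on reconciling three orientation conventions at once: the orientation of $S^3$ as the boundary of the unit ball in $\C^2$, the co-orientation of $\Sigma$ by the $\Rst$-flow fixed in Definition~\ref{defn:d-section}, and the induced orientation on the base $S^2$. I would therefore fix an explicit local trivialisation near a fibre, write the transition over the equator as a map $S^1\to S^1$, and compute its degree directly from the coordinates $(z_1,z_2)$, checking that the transverse positivity of the $\Rst$-flow through the disc-section forces the degree to be $-1$ rather than $+1$.
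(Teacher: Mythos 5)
Your second (``cleaner'') route is essentially the paper's own proof: it uses exactly the $1$-section of Section~\ref{subsubsection:disc} (the helicoidal annulus $A$ glued to the meridional disc of~$V_2$), turns it by rescaling in the fibre direction into a section of the associated disc bundle, and reads off $e=-1$ from the fact that the boundary curve $\mu_1-h$ makes one negative twist in the fibre direction per circuit of the base boundary~$\mu_1$ --- which is precisely the clutching-degree computation you propose for pinning down the sign. One caution on your first route: as literally stated, the linking number of $C_2$ with a push-off of $C_1$ equals $\mathrm{lk}(C_2,C_1)=+1$, so the identity you would need is $e=-\mathrm{lk}$ rather than $e=\mathrm{lk}$ (the ``flip'' is a genuine convention to be proved, not mere bookkeeping); since you flag this and fall back on the explicit trivialisation, your plan goes through, but the framing comparison against the fibre class must be made with respect to the trivialisation in which $h$ is the $S^1$-factor, exactly as the paper does.
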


\begin{proof}
We think of $V_1$ as in Section~\ref{subsubsection:disc} as
a solid torus $S^1\times D^2$ with the Hopf fibres given by
$S^1\times\{*\}$.
The helicoidal surface $A\subset V_1$ described in
Section~\ref{subsubsection:disc} and Figure~\ref{figure:helicoid},
together with the meridional 
disc in~$V_2$, can be turned into a section of the disc bundle
associated with the Hopf bundle by scaling in the fibre
direction,
\[ V_1=S^1\times D^2\supset\Int(A)\ni\bigl(a(p),p\bigr)\longmapsto
\bigl(|p|\cdot a(p),p\bigl)\in D^2\times D^2,\]
and extending to a section with a single zero at $p=0$.
Since $\partial A\cap \partial V_1=\mu_1-h$ makes one
negative twist in the fibre direction as we go once
along the boundary $\mu_1$ of the base disc (i.e. the
second $D^2$-factor), this rescaled section,
seen as a vector field on~$D^2$, has an index $-1$ singularity,
which means that it cuts the zero section in a single
negative point.
\end{proof}
\subsection{The Hopf fibration as a Seifert fibration}
\label{subsection:hopf-seifert}
In Sections \ref{subsection:pp1} and \ref{subsection:class1}
we are going to show how different descriptions of the Hopf fibration
as a Seifert fibration give rise to global surfaces of section
with different numbers of boundary components. Here we give a bare
bones introduction to Seifert invariants. All necessary background
on Seifert fibrations can be found in \cite[Section~2]{gela18};
for a comprehensive treatment see~\cite{jane83}.

Consider again $S^3$ as being obtained by gluing two solid tori
$V_1,V_2$. In terms of meridians $\mu_1,\mu_2$ and longitudes $h$
(on both solid tori), the identification of $\partial V_1$ with
$\partial V_2$ is given by $\mu_1=\lambda_2=-\mu_2+h$ and $h=h$.
The curve $-\mu_2$ is the \emph{negative} boundary of the section in $V_2$
given by a meridional disc. Following the standard conventions for
Seifert invariants, see~\cite{gela18}, the gluing of
the neighbourhoods of the distinguished fibres in a Seifert manifold
should indeed be described with respect to the negative boundary
of the section away from the distinguished fibres (which, in the
general Seifert setting, include all multiple fibres). This means that
the described gluing corresponds to writing $S^3$ as the Seifert manifold
$S^3=M(0;(1,1))$. Here $0$ is the genus of the base $S^2$, and $(1,1)$
are the coefficients of $-\mu_2$ and $h$ in the expression for~$\mu_1$.

For a Seifert manifold
$M\bigl(g;(\alpha_1,\beta_1),\ldots,(\alpha_k,\beta_k)\bigr)$,
the Euler number is defined as $-\sum_i\alpha_i/\beta_i$,
see~\cite{jane83}. This is consistent with our calculation of
the Euler number of the Hopf fibration.

Given such a Seifert manifold
$M\bigl(g;(\alpha_1,\beta_1),\ldots,(\alpha_k,\beta_k)\bigr)$,
one can obtain equivalent descriptions by adding or deleting any pair
$(\alpha,\beta)=(1,0)$, or by replacing each $(\alpha_i,\beta_i)$ by
$(\alpha_i,\beta_i+n_i\alpha_i)$, where $\sum_i n_i=0$.
For instance, the Hopf fibration can alternatively be described as
\begin{equation}
\label{eqn:hopf3}
M\bigl(0;(1,1),(1,1), (1,-1)\bigr),
\end{equation}
by first adding two pairs $(1,0)$, and then replacing them
by $(1,1)$ and $(1,-1)$.
\subsection{A pair of pants $1$-section}
\label{subsection:pp1}
We now want to show that the description (\ref{eqn:hopf3}) of the
Hopf fibration as a Seifert fibration with three distinguished
fibres (albeit of multiplicity~$1$) gives rise to
a pair of pants $1$-section with one negative and two positive
boundary components, i.e.\ one component where the boundary orientation
is the opposite of the direction of the Hopf flow, and two components
where the orientations coincide. An alternative construction illustrates
the equivalences between Seifert invariants in terms of a
modification of the surface of section. We also describe a third construction
that we shall take up again in Section~\ref{subsubsection:pos3}.

\vspace{1mm}

(i) The description (\ref{eqn:hopf3}) means that we start with
a $2$-sphere with three open discs removed, i.e.\ a pair of pants $P$.
Over $P$ the Hopf bundle is the trivial bundle $S^1\times P$,
and we take a constant section there (which we identify with~$P$).
Write the negatively oriented boundary $-\partial P$ of $P$ as
\[ -\partial P=\sigma_1\sqcup\sigma_2\sqcup\sigma_3.\]
We now glue three solid tori $V_1,V_2,V_3$ to $S^1\times P$ with gluing map
\[ \mu_i=\sigma_i+\beta_ih,\;\; h=h,\]
where $\beta_1=\beta_2=1$ and $\beta_3=-1$. In $V_i$ we find
a helicoidal annulus $A_i$ with oriented boundary $\partial A_i=
\beta_i C_i\sqcup\sigma_i$, where $C_i$ is the soul of~$V_i$.
These three annuli can be glued to $P$ along the $\sigma_i$
to yield the desired $1$-section.

\vspace{1mm}

(ii) Alternatively, we can start with a disc-like positive $1$-section
$\Sigma$ for the Hopf flow and modify it as follows. Choose a disc
$D^2_0\subset\Int(\Sigma)$. The Hopf fibres passing through $D^2_0$
define a trivial bundle $S^1\times D^2_0\rightarrow D^2_0$. Remove the
interior of two disjoint discs $D^2_2$ and $D^2_3$ from the interior
of~$D^2_0$, leaving us with a product bundle over a pair of pants~$P$.
In $S^1\times P$ we find a vertical annulus $A$ with oriented boundary
equal to a positive fibre in $S^1\times\partial D^2_2$
and a negative fibre in $S^1\times\partial D^2_3$.
This annulus can be assumed
to intersect the constant section $P$ in a simple curve $\gamma$ joining
$\partial D^2_2$ with $\partial D^2_3$.

\begin{figure}[h]
\labellist
\small\hair 2pt
\pinlabel $A$ [l] at 72 133
\pinlabel $P$ [b] at 131 75
\endlabellist
\centering
\includegraphics[scale=.5]{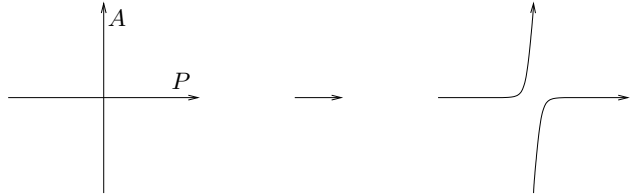}
  \caption{Gluing $A$ and $P$.}
  \label{figure:glue}
\end{figure}

By slicing open both $P$ and $A$ along $\gamma$ and regluing them as
illustrated (in a cross section)
in Figure~\ref{figure:glue}, one obtains a $1$-section
with helicoidal boundary curves
\[  -\sigma_2=-\mu_2 +h\;\;\text{and}\;\;-\sigma_3=-\mu_3-h\]
on $S^1\times\partial D^2_2$ and $S^1\times\partial D^2_3$,
respectively. This $1$-section projects diffeomorphically onto~$P$
(away from the boundary curves), so it is still a pair of pants.

By gluing in helicoidal annuli $A_i$ in $S^1\times D^2_i$
with boundary $\partial A_2=C_2\sqcup \sigma_2$ and
$\partial A_3=-C_3\sqcup \sigma_3$, where $C_i=S^1\times\{0\}$
is the central fibre of $S^1\times D^2_i$, we obtain again the desired
$1$-section.
This second construction actually explains
the equivalences of Seifert invariants that led from
the description of the Hopf fibration as $M\bigl(0;(1,1)\bigr)$
to that in~(\ref{eqn:hopf3}).

\vspace{1mm}

(iii) Here is a third method of construction, which will be useful later on.
This time we think of $S^3$, as at the beginning of
Section~\ref{subsection:ex-d}, as a gluing of two solid tori
$V_1,V_2$ with the identification $\mu_1=\lambda_2$, $\lambda_1=\mu_2$.
The Hopf fibration is given by the two souls $C_1,C_2$ and the
$(1,1)$-curves on the Hopf tori parallel to $\partial V_1=\partial V_2$.

\begin{figure}[h]
\labellist
\small\hair 2pt
\pinlabel $\lambda_1$ [r] at 0 105
\pinlabel $\mu_1$ [t] at 105 0
\pinlabel $\lambda_2$ [r] at 323 105
\pinlabel $\mu_2$ [t] at 428 0
\pinlabel $a_1$ [bl] at 51 170
\pinlabel $h_1$ [tl] at 64 64
\pinlabel $a_2$ [tr] at 484 64
\pinlabel $h_2$ [br] at 493 170
\endlabellist
\centering
\includegraphics[scale=.5]{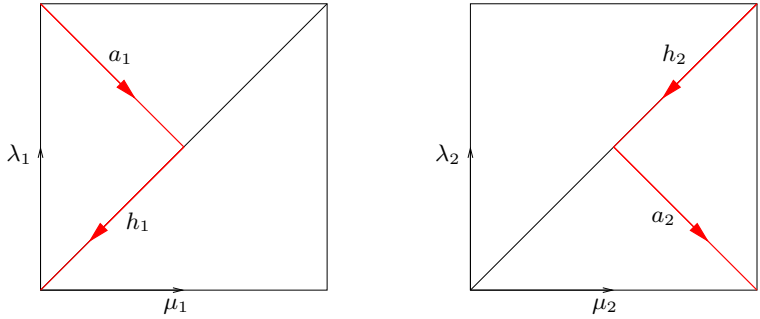}
  \caption{A pair of pants $1$-section.}
  \label{figure:pants}
\end{figure}

The simple closed curve $a_1+h_1$ on $\partial V_1$, shown in
Figure~\ref{figure:pants}, is homotopic to $-\lambda_1$. This allows us
to find an annulus $A_1$ in $V_1$ with boundary $\partial A_1=C_1
\sqcup (a_1+h_1)$. Likewise, we find an annulus $A_2$ in $V_2$ with
boundary $\partial A_2=C_2\sqcup (h_2+a_2)$.
With the chosen orientations, the $A_i$ intersect each
Hopf fibre in $\Int(V_i)\setminus C_i$ once and positively.

Under the identification of $\partial V_1$ with $\partial V_2$, the
segment $a_1$ is mapped to $-a_2$. This allows us to glue
$A_1$ and $A_2$ along these boundary segments to obtain an oriented
pair of pants $P$ with boundary $\partial P=C_1\sqcup C_2\sqcup(h_1+h_2)$.
Since $h_1+h_2$ is a negative Hopf fibre, we have again
a $1$-section with one negative and two positive boundary components.

\begin{figure}[h]
\labellist
\small\hair 2pt
\pinlabel $h_1$ [r] at 97 35
\pinlabel $h_1$ [r] at 97 179
\pinlabel $h_2$ [r] at 97 89
\pinlabel $a_1=-a_2$ [l] at 118 87
\pinlabel $a_1=-a_2$ [r] at 82 135
\endlabellist
\centering
\includegraphics[scale=1.0]{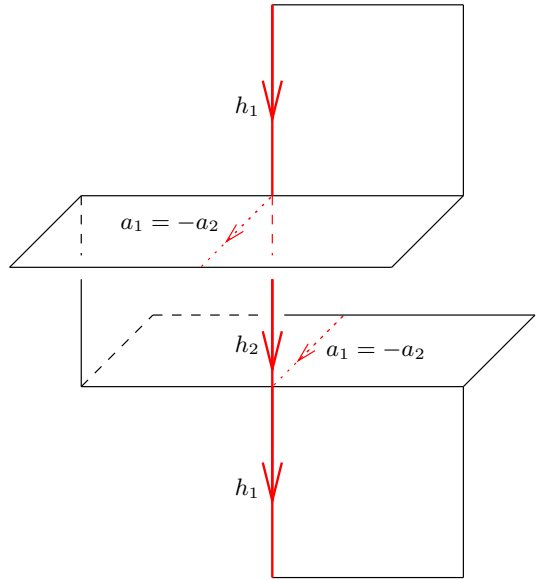}
  \caption{A piecewise linear helix.}
  \label{figure:pl-helix}
\end{figure}

Near the boundary component $h_1+h_2$ in $\partial V_1=\partial V_2$,
this surface $P$ looks as in Figure~\ref{figure:pl-helix}. This
is a piecewise smooth surface that can be smoothened rel boundary fibre
into a helicoidal surface, see Figure~\ref{figure:smoothen}.
From Figure~\ref{figure:pl-helix}
we also see that this helicoidal surface is a $1$-section,
for the Hopf fibres near $-(h_1+h_2)$ are parallel curves with respect to
the surface framing given by the $2$-torus $\partial V_1=\partial V_2$.
Notice that in Figure~\ref{figure:pl-helix} this $2$-torus (near the
fibre $-(h_1+h_2)$) is given by the vertical plane determined by
that fibre and the line segments $a_1=-a_2$; the solid torus $V_2$
sits to the left of this plane, $V_1$ sits to the right.

\begin{figure}[h]
\centering
\includegraphics[scale=.45]{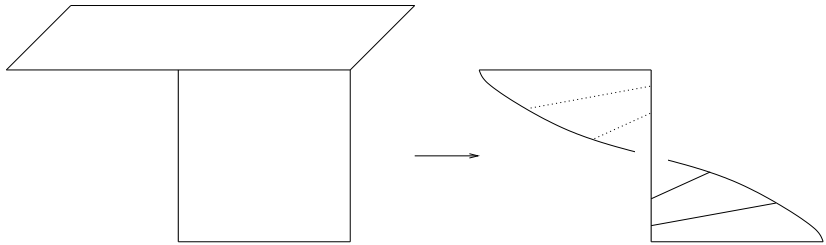}
  \caption{Smoothening rel boundary fibre.}
  \label{figure:smoothen}
\end{figure}

\section{Lens spaces}
\label{section:lens}
The lens space $L(p,q)$, for $p\in\N$ and
$q$ an integer coprime with $p$, is the oriented three-manifold defined as the
quotient of $S^3$ (with its natural orientation as
boundary of the $4$-ball in~$\C^2$) under the $\Z_p$-action generated by
\[ (z_1,z_2)\longmapsto (\rme^{2\pi\rmi/p}z_1,\rme^{2\pi\rmi q/p}z_2).\]
Since this action commutes with the Hopf flow, the flow descends
to the quotient. However, the $S^1$-action on the quotient defined
by the Hopf flow will not, in general, be free, so it only defines
a Seifert fibration on $L(p,q)$. For a classification of the
Seifert fibrations on lens spaces see~\cite{gela18}.
\subsection{The lens spaces $L(d,1)$}
The $\Z_d$-action on $S^3$ that yields $L(d,1)$ as the quotient
is the one generated by $\Psi_{2\pi/d}$, where $\Psi_t$ is the
Hopf flow from~(\ref{eqn:hopf-flow}). Here the $\Z_d$-action is along the
Hopf fibres, so the Hopf fibration descends
to the quotient to give $L(d,1)$ the structure of an $S^1$-bundle over
$S^2$ of Euler number~$-d$.
This is consistent with the description of $L(d,1)$ as the manifold
obtained from $S^3$ by surgery along an unknot with framing~$-d$,
see~\cite[p.~158]{gost99}.

This $S^1$-fibration on $L(d,1)$ corresponds to writing it
as the Seifert manifold $L(d,1)=M\bigl(0;(1,d)\bigr)$.
Indeed, the gluing $\mu_1=-\mu_2+h$, which gave us $S^3$
in Section~\ref{subsection:hopf-seifert}, becomes $\mu_1=-\mu_2+dh'$
with respect to the shortened fibre~$h'$.
\subsection{The classification of $1$-sections}
\label{subsection:class1}
The $S^1$-fibration of the lens space $L(d,1)$, including
$S^3=L(1,1)$, coming from the Hopf fibration
can be written as
\begin{equation}
\label{eqn:hopfd}
M\bigl(0;\underbrace{(1,1),\ldots,(1,1)}_{d+k},
\underbrace{(1,-1),\ldots (1,-1)}_k\bigl)
\end{equation}
with any $k\in\N_0$. This description gives rise to a
$1$-section with $k$ negative and $d+k$ positive boundaries.
Indeed, let $\Sigma_0$ be the $2$-sphere $S^2$ with $d+2k$
open discs removed. Write the boundary of $\Sigma_0$
with the opposite of its natural orientation as
\[ -\partial\Sigma_0= S^1_0\sqcup\ldots\sqcup S^1_{d+2k}.\]
The Seifert bundle (\ref{eqn:hopfd}) is then obtained by
gluing $d+2k$ solid tori $V_i=S^1\times D^2$ to the trivial
$S^1$-bundle $S^1\times\Sigma_0$ by gluing fibres to fibres
(which in the $V_i$ are given by the $S^1$-factor),
and the meridian $\mu_i$ of $V_i$ to $S^1_i\pm h$,
where $h$ denotes the fibre class, and the sign is positive for
$i=1,\ldots,d+k$, negative for $i=d+k+1,\ldots, d+2k$.
This means that $S_i=\mu_i\mp h$.

Write $C_i=S^1\times\{0\}$ for the soul of the solid torus $V_i$.
In $V_i$ we have a helicoidal surface with boundary
$\pm C_i\sqcup (\mu_i\mp h)$. These helicoidal surfaces glue with
$\Sigma_0$ to form a $1$-section for the Hopf flow on~$L(d,1)$. 

\begin{prop}
\label{prop:1section}
Any $1$-section for the Hopf flow on $L(d,1)$ is isotopic to one
of those genus~$0$ surfaces just described.
\end{prop}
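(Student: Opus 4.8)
The plan is to start with an arbitrary $1$-section $\Sigma\subset L(d,1)$ and reduce it, through controlled isotopies, to the normal form described before the proposition. The key structural fact is that a $1$-section meets every regular Hopf fibre exactly once, so away from its boundary fibres it is a \emph{genuine section} of the $S^1$-bundle over the complement of the projected boundary points. The first step is therefore to project: let $p_1,\dots,p_m\in S^2$ be the images of the boundary fibres of $\Sigma$, and observe that $\Int(\Sigma)$ projects diffeomorphically onto $S^2\setminus\{p_1,\dots,p_m\}$. This already pins down the topological type of $\Sigma$ up to the boundary behaviour: it is a sphere with $m$ punctures, and hence genus~$0$, provided each boundary component of $\Sigma$ maps to a single point $p_j$ (which it must, being contained in a single fibre). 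The main content is then \emph{not} identifying the genus but showing that the local model near each boundary fibre is the standard helicoidal one, and that the number of positive versus negative boundaries is the only remaining invariant.

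Next I would analyse a neighbourhood of a single boundary fibre. Pick a boundary component $\gamma\subset\partial\Sigma$, lying over some $p_j$, and take a small fibred solid-torus neighbourhood $V=S^1\times D^2$ of that fibre, with the Hopf fibres corresponding to the $S^1$-factor (this is the identification used throughout Section~\ref{subsection:class1}). The surface $\Sigma\cap V$ is a properly embedded surface in $V$ whose interior is transverse to the fibres and meets each regular fibre once, with boundary on the core $C=S^1\times\{0\}$. The claim to establish is that $\Sigma\cap V$ is isotopic rel the fibre to a helicoidal surface with boundary $\pm C\sqcup(\mu\mp h)$, the sign recording whether $\gamma$ is a positive or negative boundary in the sense of Definition~\ref{defn:d-section}. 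This is where the transversality and the "hits each fibre once" conditions do the work: the boundary slope on $\partial V$ is forced to be $\mu\mp h$ because the surface framing must differ from the fibre framing by exactly one twist, the sign being the orientation data. I expect the precise bookkeeping of this slope — tracking how the surface winds in the fibre direction as one traverses $\gamma$, and matching it to the Seifert coefficient $(1,\pm1)$ — to be the \textbf{main obstacle}, since it requires care with orientations of $C$, $\mu$, $h$, and the induced boundary orientation of $\Sigma$.

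Having normalised each boundary neighbourhood, I would excise the solid tori $V_i$ and isotope the remaining piece $\Sigma\setminus\bigcup_i V_i$, a section over the planar surface $\Sigma_0=S^2\setminus\bigcup_i\Int(D^2_i)$, to the \emph{constant} section. Sections of a trivial $S^1$-bundle over a planar surface are classified up to fibre-preserving isotopy by their homotopy class, and since $\Sigma_0$ is homotopy equivalent to a wedge of circles there is freedom to adjust; but the adjustment changes the boundary slopes, and these changes are precisely the Seifert-invariant equivalences $(\alpha_i,\beta_i)\mapsto(\alpha_i,\beta_i+n_i\alpha_i)$ with $\sum_i n_i=0$ recorded in Section~\ref{subsection:hopf-seifert}. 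Thus any such section is isotopic to the constant one after absorbing twists into the helicoidal pieces, at the cost of changing how many boundaries are positive versus negative while keeping the net count (the difference $\#\text{positive}-\#\text{negative}$, equal to $d$) fixed by the Euler number. Reassembling, $\Sigma$ is isotopic to the model surface built from $\Sigma_0$ and standard helicoids, i.e.\ to one of the genus~$0$ surfaces described, completing the argument. The one loose end to treat carefully is that these fibre-preserving isotopies on the pieces glue to a global ambient isotopy of $L(d,1)$; this follows from an isotopy-extension argument, applied fibrewise.
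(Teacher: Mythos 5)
Your first two steps agree with the paper's proof: away from the boundary fibres a $1$-section is an honest section, so its interior is a punctured sphere; near each boundary fibre it must be a one-turn helicoid with boundary $\pm C_i\sqcup(\mu_i\mp h)$; and gluing data plus the Euler number $-d$ force $k_+=d+k_-$. The genuine gap is in your final normalisation step. You propose to isotope the piece over $\Sigma_0$ to ``the constant section'' --- but constant with respect to which trivialisation? Sections of a trivial $S^1$-bundle over $\Sigma_0$ are classified up to fibre-preserving isotopy by $H^1(\Sigma_0;\Z)\cong\Z^{m-1}$, so there is a real winding obstruction, and the mechanism you offer for killing it --- ``absorbing twists into the helicoidal pieces'' --- does not work: adding $n_i$ twists at the $i$-th boundary changes the local model from $(1,\pm 1)$ to $(1,\pm 1+n_i)$, i.e.\ a helicoid making $\pm 1+n_i$ full turns, which meets nearby fibres $|\pm 1+n_i|$ times and hence is no longer part of a $1$-section unless $n_i=0$. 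Your own phrase that the normalisation comes ``at the cost of changing how many boundaries are positive versus negative'' is the red flag: an isotopy of a fixed surface $\Sigma$ cannot change these numbers (the orientations of $\Sigma$ and of the flow determine them), and the Seifert moves $(\alpha_i,\beta_i)\mapsto(\alpha_i,\beta_i+n_i\alpha_i)$, $\sum_i n_i=0$, relate \emph{different normal forms} to each other, not a given surface to a normal form.

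The paper avoids this trap by never normalising a single section against an absolute model. It compares \emph{two} $1$-sections with the same counts $(d+k,k)$: first their boundary fibres are made to coincide (any two finite point sets of the same cardinality on $S^2$ are isotopic), then the one-turn helicoids are matched (any two left-handed, resp.\ right-handed, one-turn helicoids about the same fibre are isotopic), and finally \emph{one of the two sections itself} is used to define the trivialisation $L(d,1)\setminus\bigcup_i\Int(V_i)\cong S^1\times\Sigma_0$. In that trivialisation the second section has literally the same boundary as the constant one, and the proof concludes that the two are isotopic rel boundary. This comparison device --- trivialising with one of the competitors so that ``constant'' is well defined and the residual freedom is pinned by the coinciding boundary --- is exactly what your write-up is missing, and without it (or some substitute argument controlling the class in $H^1(\Sigma_0;\Z)$) the uniqueness claim does not go through.
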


\begin{proof}
Let $\Sigma$ be a $1$-section with $k_+$ positive and $k_-$
negative ends. Remove solid tori $V_i$ around the boundary fibres.
In $V_i$, the $1$-section $\Sigma$ has to look like
a helicoidal surface with boundary $\pm C_i\sqcup (\mu_i\mp h)$;
this is a consequence of one boundary of $\Sigma\cap V_i$ being $\pm C_i$,
and the fact that $\Sigma$ is a $1$-section
(cf.\ Figure~\ref{figure:helicoid}).

The part $\Sigma_0$ of $\Sigma$ lying outside the interiors of the
$V_i$ defines a trivialisation of the $S^1$-bundle there, so we can
write $L(d,1)\setminus\cup \Int(V_i)$ as $S^1\times\Sigma_0$. The
identification of the boundary components of $\Sigma_0$ (with
orientation reversed) with the $\mu_i\mp h$ completely determines
the gluing of $S^1\times\Sigma_0$ with the~$V_i$. Each such gluing
contributes $\pm 1$ to the Euler number, so we must have
$k_+=d+k_-$. It follows that $\Sigma$ is, up to diffeomorphism,
one of the surfaces we described above.

Given two such $1$-sections with $d+k$ positive and $k$ negative
boundaries, we can first isotope them so as to make
the boundaries coincide, since any finite set of distinct points on $S^2$
can be isotoped to any other set of the same cardinality.
Near a positive (resp.\ negative) boundary, the $1$-sections
look like left-handed (resp.\ right-handed) helicoidal surfaces making
one full turn; any two such surfaces are isotopic.

As before, use one of the two $1$-sections to trivialise the complement of
open solid tori around the boundary components. In this trivialised
complement $S^1\times\Sigma_0$, the boundary of the
other $1$-section $\Sigma_0'$ coincides with that of $\Sigma_0$, which
implies that $\Sigma_0$ and $\Sigma_0'$ are isotopic rel
boundary.
\end{proof}
\subsection{$d$-Sections in $S^3$ descend to $L(d,1)$}
The following statement will allow us to analyse $d$-sections for
the Hopf flow on $S^3$ via their induced $1$-sections in $L(d,1)$.

\begin{prop}
\label{prop:d-invariant}
Any $d$-section for the Hopf flow $\Psi_t$ on $S^3$ is isotopic to one
that is invariant under the $\Z_d$-action generated by
$\Psi_{2\pi/d}$ and hence descends to a $1$-section in $L(d,1)$.
\end{prop}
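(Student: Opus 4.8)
The plan is to show that a $d$-section $\Sigma\subset S^3$ can be made $\Z_d$-invariant by an averaging/isotopy argument that exploits the structure forced on $\Sigma$ near the boundary fibres and the freeness of the $\Z_d$-action away from those fibres. First I would remove small $\Z_d$-invariant solid-torus neighbourhoods $V_i$ of the boundary fibres of $\Sigma$. Since the $\Z_d$-action is along the Hopf fibres and preserves each boundary fibre setwise, these neighbourhoods can be chosen invariantly, and on each of them the intersection $\Sigma\cap V_i$ is (by the same argument as in Definition~\ref{defn:d-section} and the discussion of helicoidal surfaces) a helicoidal piece winding around $C_i$; such a helicoid can be normalised, rel its core fibre, to an explicitly $\Z_d$-invariant model making $d$ turns, because the rotation $\Psi_{2\pi/d}$ simply permutes its sheets cyclically.

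The main work is on the complement $S^3\setminus\bigcup_i\Int(V_i)$, where the Hopf flow is a free $S^1$-action and hence an honest $S^1$-bundle over the punctured sphere. Here the key point is that $\Int(\Sigma)$ is everywhere transverse to the flow, so over the punctured base $\Sigma$ presents itself as a $d$-fold multisection, i.e.\ a section of the fibrewise $d$-fold symmetric product, or equivalently (away from where sheets come together, which transversality rules out) a collection of $d$ local sections. The $\Z_d$-action rotates each fibre $S^1$ by $2\pi/d$. Given the graphical description of $\Sigma$ over the base, I would average the defining data of this multisection over the cyclic group: concretely, parametrise each fibre as $\R/2\pi\Z$ and replace the $d$ intersection heights $\{t_1(p),\ldots,t_d(p)\}$ over a base point $p$ by a $\Z_d$-invariant configuration, namely one that is equidistributed under the $2\pi/d$-rotation. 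Transversality guarantees the heights vary smoothly and never collide, so the symmetrised configuration is again a smooth embedded multisection, and the straight-line homotopy from the original heights to the symmetrised ones stays transverse and embedded, giving the desired isotopy. Matching this symmetrisation with the invariant helicoidal models near the $V_i$ requires only a collar adjustment.

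The hard part will be making the symmetrisation on the punctured-base region \emph{consistent} with the monodromy of the $S^1$-bundle and \emph{compatible} across the boundary tori $\partial V_i$, so that the isotopy is global rather than merely local. Because the bundle is nontrivial (Euler number $-1$ on $S^3$), one cannot globally write a fibre coordinate, so the averaging must be phrased intrinsically — for instance by pushing $\Sigma$ along the flow by a $p$-dependent, $\Z_d$-equivariantly-chosen time, rather than by literally averaging heights in a chart. I expect the cleanest route is to observe that a $d$-section is, by construction, already `$d$-periodic' in the flow direction in a weak sense, and to upgrade this to genuine $\Z_d$-invariance by a flow-isotopy whose time-shift interpolates between the actual return times and the evenly-spaced return times $2\pi k/d$; verifying that this interpolation remains an embedded transverse surface throughout, and that it is the identity near $\partial\Sigma$ after the helicoidal normalisation, is the technical crux. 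Once $\Sigma$ is $\Z_d$-invariant, it descends to an embedded surface in $L(d,1)$ meeting the shortened fibre $h'$ exactly once in its interior, i.e.\ a $1$-section, which is exactly the claimed conclusion.
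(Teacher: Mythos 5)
Your proposal is correct and follows essentially the same route as the paper's proof: normalise the helicoidal pieces near the boundary fibres to $\Z_d$-invariant models, then isotope the remaining $d$-fold multisection along the fibres so that adjacent intersection points on each fibre are equidistributed at distance $2\pi/d$. The monodromy worry you flag as the ``technical crux'' is in fact vacuous: since $\partial\Sigma\neq\emptyset$, the base $\Sigma_0$ is a punctured sphere with boundary, over which every $S^1$-bundle is trivial, so the paper simply works in $S^1\times\Sigma_0$ with a global fibre coordinate and your averaging argument goes through as stated.
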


\begin{proof}
Near its boundary circles, a $d$-section looks like a helicoidal surface
making $d$ full turns about the central fibre given by the boundary
curve. Any such surface is isotopic to a $\Z_d$-invariant helicoid.
The remaining part of the $d$-section is a $d$-fold covering of
a punctured sphere $\Sigma_0$, embedded transversely to
the fibres in $S^1\times\Sigma_0$. By isotoping (rel boundary)
along the fibres we can ensure that any two adjacent intersections
along a fibre occur at a distance $2\pi/d$.
\end{proof}

\begin{cor}
Any positive $d$-section for the Hopf flow on $S^3$ is a surface
with $d$ boundary components.
\end{cor}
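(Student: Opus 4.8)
The plan is to transfer the question to the lens space $L(d,1)$ and invoke the classification of $1$-sections there. By Proposition~\ref{prop:d-invariant} the given positive $d$-section $\Sigma$ is isotopic to a $\Z_d$-invariant surface, which descends to a $1$-section $\oSigma$ for the induced Hopf flow on $L(d,1)$; since the number of boundary components is unchanged under isotopy, I may assume $\Sigma$ itself is $\Z_d$-invariant. The generator $\Psi_{2\pi/d}$ of the $\Z_d$-action has no fixed points on $S^3$ (nor do its non-trivial powers), so the action is free and the quotient map $\pi\co S^3\to L(d,1)$ is a genuine $d$-fold covering; restricting it to $\Sigma$ exhibits $\pi|_\Sigma\co\Sigma\to\oSigma$ as a $d$-fold covering as well.

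Next I would count boundary components through this covering. Each boundary component of $\Sigma$ is a single Hopf fibre, and every Hopf fibre is invariant as a set under the $\Z_d$-action (which moves points only along the fibres). Hence distinct boundary fibres of $\Sigma$ project to distinct boundary fibres of $\oSigma$, and conversely the $\pi$-preimage of a boundary fibre of $\oSigma$ is exactly one boundary fibre of $\Sigma$; so $\partial\Sigma$ and $\partial\oSigma$ have the same number of components. Because $\pi$ is orientation-preserving and equivariant with respect to the two Hopf flows, the positivity of the boundary orientations is preserved, so $\oSigma$ is a positive $1$-section. By Proposition~\ref{prop:1section} a $1$-section in $L(d,1)$ has $d+k$ positive and $k$ negative boundary components; positivity forces $k=0$, leaving exactly $d$ boundary components on $\oSigma$, and hence on~$\Sigma$.

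The main obstacle I expect is the bookkeeping of orientations: I must check carefully that the chosen orientation on $\Sigma$ (so that $\Rst$ crosses it positively) and the induced boundary orientation really descend to the analogous data on $\oSigma$, so that a positive boundary fibre upstairs maps to a positive boundary fibre downstairs and the case $k=0$ of Proposition~\ref{prop:1section} applies. The remaining points—that isotopy does not alter the number of boundary components, that the $\Z_d$-action is free, and that distinct boundary fibres stay distinct under $\pi$—are routine once the covering picture is in place.
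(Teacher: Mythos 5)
Your proposal is correct and follows essentially the same route as the paper's two-line proof: invoke Proposition~\ref{prop:d-invariant} to descend to a positive $1$-section in $L(d,1)$, then apply the classification in Proposition~\ref{prop:1section}, where positivity forces the number of negative boundaries $k$ to be zero, leaving exactly $d$ boundary components. The covering-space and orientation bookkeeping you carry out (freeness of the $\Z_d$-action, one boundary fibre upstairs over each boundary fibre downstairs, positivity descending to the quotient) is precisely the implicit content of the paper's argument, just made explicit.
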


\begin{proof}
By Proposition~\ref{prop:d-invariant}, any positive $d$-section descends
to a positive $1$-section in $L(d,1)$. The latter has $d$ boundary
components by the classification of $1$-sections in
Proposition~\ref{prop:1section}.
\end{proof}
\subsection{Examples of invariant $d$-sections}
\label{subsection:ex-inv-d}
Before we classify the $d$-sections for the Hopf fibration,
we look at two examples.
\subsubsection{A positive $2$-section}
The annular $2$-section described in Section~\ref{subsubsection:annular2}
is composed of two helicoidal pieces about the boundary fibres
$C_1$ and $C_2$, glued along their other boundary curves with
the identification  $h-2\mu_1=2\mu_2-h$. As we pass to the
$\Z_2$-quotient, the two solid tori $V_1$, $V_2$ become solid tori
with fibre $h'$ of half the length of~$h$. The gluing curves descend
to $h'-\mu_1=\mu_2-h'$, or $\mu_1=-\mu_2+2h'$. This, as explained
in Section~\ref{subsection:hopf-seifert}, corresponds to the
Seifert fibration $M\bigl(0;(1,2)\bigr)$, which is the $S^1$-bundle
over $S^2$ of Euler class $-2$, i.e.\ $L(2,1)$.
\subsubsection{A positive $3$-section}
\label{subsubsection:pos3}
We now want to exhibit a positive $3$-section of genus~$1$ with three
boundary components.

\vspace{1mm}

(i) We first use a description as in
Section~\ref{subsection:pp1}~(iii), see Figure~\ref{figure:three-section}.
We think of $S^3$ as being obtained by gluing two solid tori $V_1$, $V_2$
with the identification $\mu_1=\lambda_2$, $\lambda_1=\mu_2$.
The Hopf fibration is given by the two souls $C_1,C_2$ and
the $(1,1)$-curves on the Hopf tori.

\begin{figure}[h]
\labellist
\small\hair 2pt
\pinlabel $\lambda_1$ [r] at 0 105
\pinlabel $\mu_1$ [t] at 105 0
\pinlabel $\lambda_2$ [r] at 323 105
\pinlabel $\mu_2$ [t] at 428 0
\pinlabel $h_1^1$ [br] at 28 28
\pinlabel $h_1^2$ [br] at 100 100
\pinlabel $h_1^3$ [br] at 172 172
\pinlabel $a_1$ [bl] at 64 14
\pinlabel $a_1$ [tr] at 118 176
\pinlabel $c_1$ [bl] at 182 41
\pinlabel $b_1$ [tr] at 49 104
\pinlabel $b_1$ [tr] at 210 158
\pinlabel $h_2^1$ [br] at 389 66
\pinlabel $h_2^3$ [tl] at 518 200
\pinlabel $h_2^2$ [br] at 461 138
\pinlabel $a_2$ [tr] at 353 49
\pinlabel $a_2$ [bl] at 515 102
\pinlabel $b_2$ [bl] at 443 31
\pinlabel $b_2$ [tr] at 491 201
\pinlabel $c_2$ [bl] at 397 149
\endlabellist
\centering
\includegraphics[scale=.5]{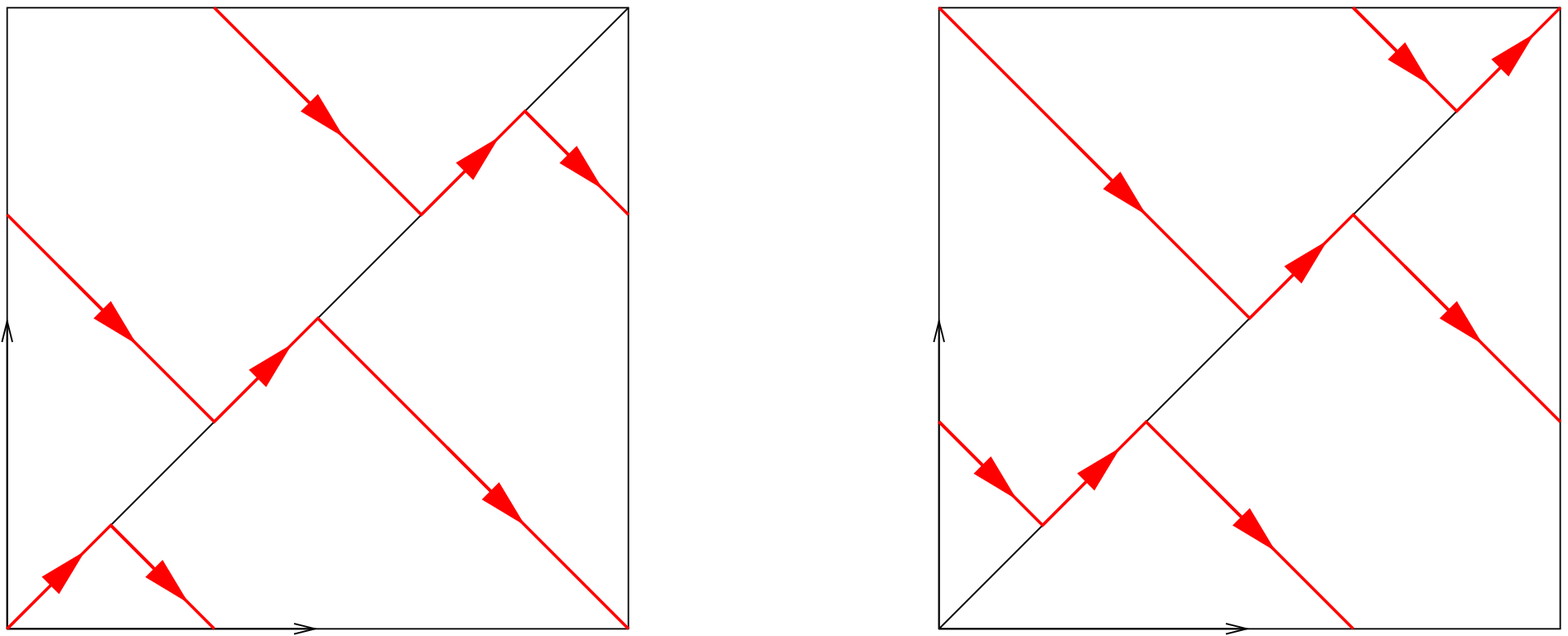}
  \caption{A positive $3$-section of genus~$1$.}
  \label{figure:three-section}
\end{figure}

Write $\sigma_1$ for the curve on $\partial V_1$ made up of
the straight line segments $h_1^1$, $a_1$, $h_1^3$, $b_1$, $h_1^2$, $c_1$.
Similarly, the curve $\sigma_2$ on $\partial V_2$ is made up of
$a_2$, $h_2^1$, $b_2$, $h_2^3$, $c_2$, $h_2^2$.
Notice that $\sigma_i$ is a $(2,-1)$-curve on $\partial V_i$ with respect to
the basis $(\mu_i,\lambda_i)$. 
In $V_1$ we have a helicoidal annulus $A_1$ with boundary
$\partial A_1=C_1\sqcup\sigma_1$; in $V_2$, an annulus $A_2$
with $\partial A_2=C_2\sqcup \sigma_2$.

Under the identification of $\partial V_1$ with $\partial V_2$,
the segments $a_1$, $b_1$, $c_1$ are mapped to $-a_2$, $-b_2$, $-c_2$.
This allows us to glue $A_1$ and $A_2$ along these segments to
obtain a surface $\Sigma$ with boundary consisting of
three positive fibres: $C_1$, $C_2$ and the one made up
of the segments $h_i^j$. Near this third fibre,
$\Sigma$ can be smoothened as in
Figure~\ref{figure:smoothen}.

The surface $\Sigma$ is a positive $3$-section. Indeed, $\Sigma$
intersects the Hopf tori in $(2,-1)$-curves; the intersection number of
these curves with the Hopf fibres, which are $(1,1)$-curves, is
\[ (2,-1)\bullet (1,1)=3.\]
From $2\mu_i-\lambda_i=3\mu_i-h$ we see that near $C_i$ the surface
$\Sigma$ looks like a left-handed helicoid making three
full turns along the fibre, as it should. The same is true for
the third component of $\partial\Sigma$,
as can bee seen from the explicit gluing construction in
Figure~\ref{figure:three-section} and a comparison with
Figure~\ref{figure:pl-helix}.

The surface $\Sigma$ is invariant under the $\Z_3$-action generated
by $\Psi_{2\pi/3}$, and hence descends to a $1$-section in $L(3,1)$
with three positive boundaries, i.e.\ a pair of pants.

This $3$-section $\Sigma$ is topologically a surface of genus~$1$.
There are many ways to see this. One is to observe that $\Sigma$ is obtained
by gluing two annuli along three segments in one boundary component of
each annulus. This is the same as joining the annuli by one-handles.
Joining the two annuli with a single one-handle is the same as
attaching two one-handles to a two-disc so as to create a pair of
pants. We then attach two further
one-handles to the two-disc such that the `outer' boundary stays connected
(since this is the boundary of the helicoidal surface about the
fibre made up of the~$h_i^j$) and the surface is
orientable. This is a $2$-torus with three
discs removed, see Figure~\ref{figure:three-section-topology}
or the discussion in~\cite{geig17}.

\begin{figure}[h]
\labellist
\small\hair 2pt
\endlabellist
\centering
\includegraphics[scale=.5]{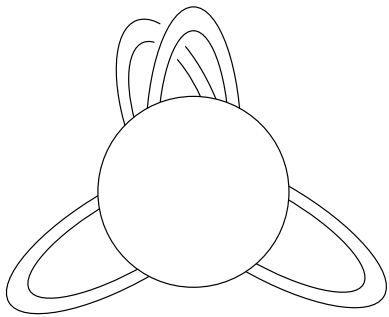}
  \caption{The topology of the $3$-section.}
  \label{figure:three-section-topology}
\end{figure}

Alternatively, we can appeal to the Riemann--Hurwitz formula.
We formulate the relevant result in full generality
for positive $d$-sections.

\begin{prop}
\label{prop:Sigmagd}
Let $\Sigma_{g,d}$ be the connected, closed, orientable surface of genus
$g$ with $d$ open discs removed, and $S^2_d$ the $2$-sphere with $d$
open discs removed. There is a $d$-fold unbranched covering
$\Sigma_{g,d}\rightarrow S^2_d$ if and only if $g=(d-1)(d-2)/2$.
\end{prop}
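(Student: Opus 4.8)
The plan is to treat the two implications separately: the forward direction follows immediately from the multiplicativity of the Euler characteristic, while the reverse direction calls for an explicit construction of the covering, which I would realise as a connected cyclic cover.

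For the ``only if'' direction, suppose a $d$-fold unbranched covering $p\co\Sigma_{g,d}\to S^2_d$ exists. A compact, connected, orientable surface of genus $\gamma$ with $b$ boundary circles has Euler characteristic $2-2\gamma-b$, so $\chi(S^2_d)=2-d$ and $\chi(\Sigma_{g,d})=2-2g-d$. Since the Euler characteristic is multiplicative under finite coverings of compact CW complexes, we have $\chi(\Sigma_{g,d})=d\cdot\chi(S^2_d)$, that is $2-2g-d=d(2-d)$. This rearranges to $2g=d^2-3d+2=(d-1)(d-2)$, forcing $g=(d-1)(d-2)/2$. Note that here the boundary count $d$ is part of the definition of $\Sigma_{g,d}$, so only $g$ has to be pinned down.

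For the ``if'' direction, I would build the covering directly. The surface $S^2_d$ is homotopy equivalent to a wedge of $d-1$ circles, with $\pi_1(S^2_d)=\langle x_1,\ldots,x_d\mid x_1\cdots x_d=1\rangle$, where $x_i$ is represented by the $i$-th boundary circle. Sending each generator to $1\in\Z_d$ is well defined, since $x_1\cdots x_d\mapsto d\equiv 0$, and determines a $d$-fold covering. The image $\langle 1\rangle=\Z_d$ acts transitively on the $d$ sheets via its regular representation, so the total space is connected; as a covering of an orientable surface it is orientable. The monodromy around the $i$-th boundary circle is the $d$-cycle corresponding to $1\in\Z_d$, so each boundary circle of $S^2_d$ lifts to a single circle wrapping $d$ times; hence the total space has exactly $d$ boundary components. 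It is therefore $\Sigma_{g,d}$ for some $g$, and the Euler characteristic computation of the previous paragraph forces $g=(d-1)(d-2)/2$.

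The one place where care is needed — and thus the main obstacle — is arranging in the reverse direction that the cover be simultaneously connected and have exactly $d$ boundary components; both are secured by sending every boundary generator to the same generator of $\Z_d$, since transitivity gives connectedness and each boundary generator mapping to a $d$-cycle prevents any boundary circle from splitting upstairs. The Euler characteristic then does the rest. I would also remark that this cyclic cover is exactly the one realised geometrically by the $\Z_d$-quotient $S^3\to L(d,1)$ relating a positive $d$-section to its induced $1$-section, which is why the statement is precisely what is needed in the application to positive $d$-sections.
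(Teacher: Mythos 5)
Your proof is correct, but both halves differ genuinely from the paper's. For the ``only if'' part, the paper caps off all boundary circles to extend the cover to a branched covering $\Sigma_g\rightarrow S^2$ with $d$ branch points upstairs, each of branching index~$d$, and then applies the Riemann--Hurwitz formula; note that this tacitly uses that each boundary circle of $\Sigma_{g,d}$ covers its image circle with full degree~$d$, which holds by a pigeonhole count ($d$ circles upstairs distributed over $d$ base circles, at least one over each). Your direct appeal to the multiplicativity of the Euler characteristic under finite unbranched covers, $2-2g-d=d(2-d)$, sidesteps this boundary analysis altogether and is the more elementary argument, since the boundary count is fixed at $d$ on both sides by hypothesis. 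For the ``if'' part, the paper gives no abstract argument at all: it defers to the explicit construction in the following section of a positive $d$-section built from $d-1$ helicoids making $d$ full twists, joined by bands, which realises the covering concretely inside $S^3$ and is needed anyway for the uniqueness statement of Theorem~\ref{thm:d-section}. Your cyclic cover, defined by sending every boundary generator of $\pi_1(S^2_d)=\langle x_1,\ldots,x_d\mid x_1\cdots x_d=1\rangle$ to $1\in\Z_d$, is a clean self-contained alternative: surjectivity of the monodromy gives connectedness, the holonomy around each boundary circle is a $d$-cycle, so no boundary circle splits upstairs, and your Euler characteristic computation then pins down the genus. Your closing observation is exactly right: this cyclic cover is the abstract model of the quotient $S^3\rightarrow L(d,1)$ of Proposition~\ref{prop:d-invariant} restricted to a positive $d$-section. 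In short, the paper's route buys the explicit surface used later in the classification; yours buys a shorter, purely topological proof of the proposition as stated.
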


\begin{proof}
The `if' part will follow from the construction of a positive $d$-section
below. For the `only if' part, we extend the unbranched covering
$\Sigma_{g,d}\rightarrow S^2_d$ to a branched covering
$\Sigma_g\rightarrow S^2$ with $d$ branch points upstairs, each of
branching index~$d$. Then, by the Riemann--Hurwitz formula, the Euler
characteristic of $\Sigma_g$ is
\[ 2-2g=\chi(\Sigma_g)=d\bigl(\chi(S^2)-d\bigr)+d=3d-d^2,\]
and hence $g=(d-1)(d-2)/2$.
\end{proof}

\begin{rem}
In Section~\ref{section:pos-d} we give a proof not only of the
`if' part of Proposition~\ref{prop:Sigmagd}, but also of the
`only if' part, directly from the classification of
positive $d$-sections, which does not use the Riemann--Hurwitz formula.
\end{rem}

(ii) Here is an alternative construction of the positive $3$-section
$\Sigma$ as a lift of the positive $1$-section $\oSigma$ in $L(3,1)$.
This construction has the advantage of generalising to all~$d$,
while in (i) we made essential use of the fact that there
was only one boundary fibre apart from $C_1,C_2$, which we could place
on the Hopf torus $\partial V_1=\partial V_2$.

Recall from Proposition~\ref{prop:1section} that $\oSigma$
is a surface of genus~$0$ with three boundary components, i.e.\ a pair
of pants. Near any of these boundary components, $\oSigma$
looks like a left-handed helicoid making one full turn
along the fibre, see Figure~\ref{figure:one-section}.

\begin{figure}[h]
\labellist
\small\hair 2pt
\endlabellist
\centering
\includegraphics[scale=.45]{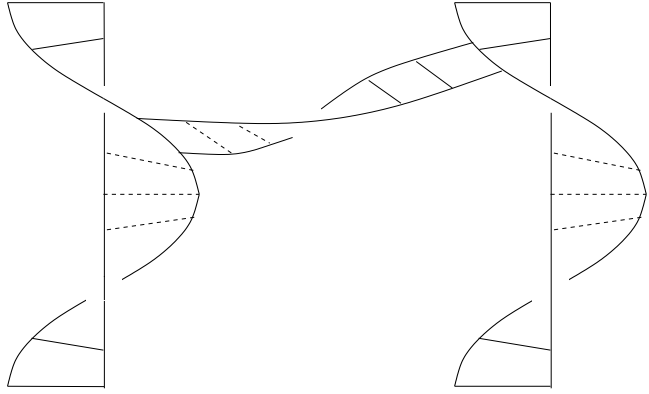}
  \caption{The $1$-section $\oSigma$ in $L(3,1)$ near two boundary fibres.}
  \label{figure:one-section}
\end{figure}

Consider two of these three helicoids. They project to discs in~$S^2$.
Join these discs by a band as shown in Figure~\ref{figure:sigma0}.
Over this part of $S^2$ the bundle $L(3,1)\rightarrow S^2$
is trivial, and the two helicoids can be joined by
a band to form a $1$-section.

\begin{figure}[h]
\labellist
\small\hair 2pt
\endlabellist
\centering
\includegraphics[scale=.4]{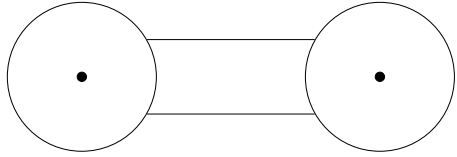}
  \caption{The projection of $\oSigma$ to~$S^2$.}
  \label{figure:sigma0}
\end{figure}

The lift of this part of $\oSigma$ to $S^3$ is shown in
Figure~\ref{figure:three-covering}. Observe that this surface
in $S^3$ has three boundary components: the two special fibres
and one further connected component.

\begin{figure}[h]
\labellist
\small\hair 2pt
\endlabellist
\centering
\includegraphics[scale=.6]{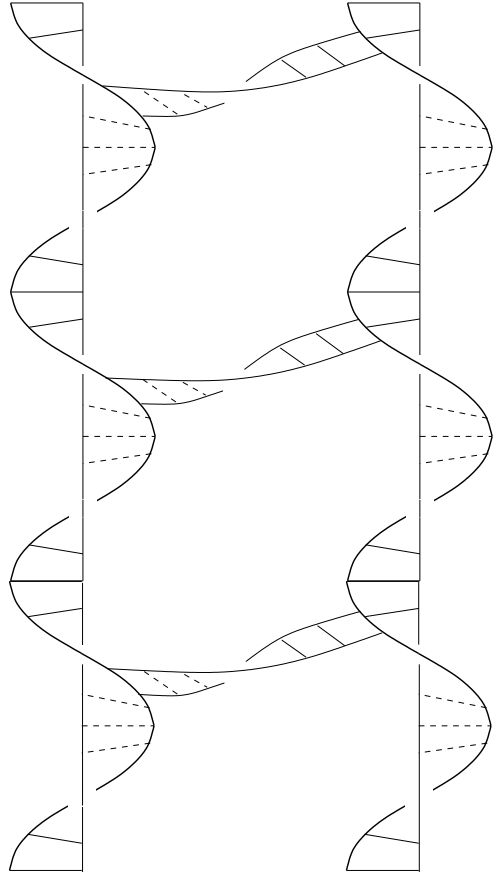}
  \caption{Alternative view of the positive $3$-section.}
  \label{figure:three-covering}
\end{figure}

If we write $\mu_0$ for the boundary of the disc in $S^2$ shown
in Figure~$7$ (oriented positively, i.e.\ counter-clockwise),
and $h$ for the Hopf fibre in~$S^3$, then this third boundary component
represents the class $3\mu_0-2h$ in the $2$-torus in $S^3$
sitting over~$\mu_0$.

Now consider a small disc around the base point in $S^2$ of
the third boundary fibre of $\oSigma$. We denote the boundary
of this disc by~$\mu_3$. Over this disc, $\oSigma$ forms
a left-handed helicoid making one full turn; its lift $\Sigma$
to $S^3$ makes three full turns. Thus, the boundary of this
helicoid on the $2$-torus in $S^3$ sitting over $\mu_3$
is the curve $3\mu_3-h$.

In order to obtain the $3$-section $\Sigma$ in $S^3$ we need to
glue the part shown in Figure~\ref{figure:three-covering}
with this third helicoid by identifying $3\mu_0-2h$ with
$-(3\mu_3-h)$. This amounts to the same as gluing $\mu_0$
with $-\mu_3+h$, which --- as discussed in
Section~\ref{subsection:hopf-seifert} ---
corresponds to the description of $S^3$ as $M\bigl(0;(1,1)\bigr)$,
so we have indeed found a positive $3$-section in~$S^3$.
\section{Positive $d$-sections}
\label{section:pos-d}
We now want to extend the examples from Section~\ref{subsection:ex-inv-d}
to all natural numbers~$d$, and then give a classification of these
$d$-sections.
\subsection{Construction of a positive $d$-section}
In order to obtain a positive $d$-section for the Hopf flow,
we need to replace the two $3$-helicoids in
Figure~\ref{figure:three-covering} by $d-1$ left-handed helicoids
making $d$ full twists, joined in sequence by $d$ bands between
any two successive helicoids.

First of all, we want to observe that the boundary of this
oriented surface consists of one connected component besides the
$d-1$ boundary fibres where the helicoids are attached. Start at a
boundary point at the top right of Figure~\ref{figure:three-covering}
(generalised to~$d$). Each time we walk along the boundary of a
horizontal band and continue along the boundary of the next helicoid
to the left, we move down one level on these helicoids. After having reached
the left-most helicoid, we move back to the right along horizontal
bands, staying on the same level. When we have returned to the right-most
helicoid, we move down one more level until we arrive again
at a band going to the left. In the base, this path projects to one
full passage along the outer boundary of~$\oSigma$.

Thus, with each such turn, we have moved down $d-1$ levels. After
completing $d$ full turns in the base,
the lifted path has covered the whole boundary upstairs. So this
boundary upstairs is connected, and it represents the class
$d\mu_0-(d-1)h$ on the boundary of the solid torus sitting
over~$\oSigma$. As at the end of the preceding section,
we see that this accords with the description of the
Hopf fibration as $M\bigl(0;(1,1)\bigr)$.

After joining the $d-1$ helicoids in sequence by a single band between any
two successive helicoids, we have a surface consisting of $d-1$ one-handles
attached to a single two-disc. We then add a further $(d-1)(d-2)$ one-handles,
ending up with an oriented surface with $d$ boundary components. Thus,
the genus of this surface is
\[ g=\frac{1}{2}\bigl((d-1)^2-(d-1)\bigr)=\frac{1}{2}(d-1)(d-2).\]
This proves the `if' part of Proposition~\ref{prop:Sigmagd}.
\subsection{The classification of positive $d$-sections}
By considering the choices in the above construction, we arrive
at the following classification result. In particular, this reproves the
`only if' part of Proposition~\ref{prop:Sigmagd}.

\begin{thm}
\label{thm:d-section}
For each $d\in\N$ there is, up to isotopy, a unique positive $d$-section
for the Hopf flow on~$S^3$. It is a connected, orientable surface
of genus $(d-1)(d-2)/2$ with $d$ boundary components.
\end{thm}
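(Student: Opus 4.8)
The plan is to split the statement into an existence-plus-invariants part, handled by the explicit model just built, and a \emph{uniqueness} part, which I would reduce to the already-established classification of $1$-sections on $L(d,1)$ by exploiting the covering $q\co S^3\to L(d,1)$. The genus assertion for an \emph{arbitrary} positive $d$-section will then follow formally from uniqueness, since it must be isotopic to the model.

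First I would record existence together with the stated topological type. The construction in the preceding subsection produces a connected, orientable positive $d$-section whose boundary has exactly $d$ components (by the Corollary to Proposition~\ref{prop:d-invariant}), and whose genus is $(d-1)(d-2)/2$ by the handle count there (one two-disc, $d-1$ one-handles to string the helicoids together, and $(d-1)(d-2)$ further one-handles). This simultaneously supplies the `if' direction of Proposition~\ref{prop:Sigmagd}.

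The heart of the argument is uniqueness. Given any positive $d$-section $\Sigma$, Proposition~\ref{prop:d-invariant} lets me isotope it to be invariant under the $\Z_d$-action generated by $\Psi_{2\pi/d}$. Because the $d$ intersection points of each Hopf fibre with $\Int(\Sigma)$ are equally spaced, they form a single free $\Z_d$-orbit, so $\Sigma=q^{-1}(\oSigma)$ is the \emph{full} preimage of the descended positive $1$-section $\oSigma$; in particular $\Sigma$ is completely determined by $\oSigma$, and $\Sigma\to\oSigma$ is the restriction of the cover $q$. Since a loop encircling any boundary fibre of $\oSigma$ maps to the fibre class, which generates $\pi_1(L(d,1))\cong\Z_d$, the monodromy of this pullback cover is surjective, so $\Sigma$ is connected and each boundary circle of $\oSigma$ is covered $d$-to-$1$ by a single circle; this recovers the $d$ boundary components, and multiplicativity of $\chi$ under the covering independently gives back genus $(d-1)(d-2)/2$.

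It then remains to compare two such sections. By Proposition~\ref{prop:1section} the positive $1$-section $\oSigma$ is unique up to isotopy in $L(d,1)$, and by the isotopy extension theorem this isotopy is realised by an ambient isotopy $\phi_t$ of $L(d,1)$ with $\phi_0=\mathrm{id}$. Lifting $\phi_t$ through $q$ yields a unique ambient isotopy $\tilde\phi_t$ of $S^3$ with $\tilde\phi_0=\mathrm{id}$, and uniqueness of lifts forces $\tilde\phi_t$ to commute with the deck $\Z_d$-action, hence to carry full preimages to full preimages; so $\tilde\phi_1$ sends one $\Z_d$-invariant $d$-section onto the other, and the two positive $d$-sections are isotopic. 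The main obstacle I expect is precisely this last step: verifying that the covering data of $\Sigma$ over $\oSigma$ is rigid --- i.e.\ that $\Sigma$ is genuinely the full equivariant preimage and that the realising ambient isotopy lifts $\Z_d$-equivariantly --- rather than merely transporting the abstract isotopy type of $\oSigma$. Once the equivariant lifting is secured, uniqueness holds, and, combined with the genus of the explicit model, it reproves the `only if' part of Proposition~\ref{prop:Sigmagd} without invoking the Riemann--Hurwitz formula.
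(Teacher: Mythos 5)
Your proposal is correct, and its uniqueness argument takes a genuinely different route from the paper's. The paper also reduces to $L(d,1)$ via Propositions \ref{prop:d-invariant} and \ref{prop:1section}, but then proves uniqueness of the lift by hand: it fixes the common $1$-section downstairs and re-examines the explicit construction, checking piece by piece that there are no choices up to isotopy (the $d$-fold helicoids are determined by the $1$-helicoids, the $d\times(d-2)$ lifted bands are normalised by fibre shifts of $2\pi/d$ as in Figure \ref{figure:three-covering}, and the final connecting annulus is unique up to isotopy); connectedness is proved separately by lifting paths in $S^2$ from any two points of $\Sigma$ to a boundary fibre, and the genus comes from the handle count of the model. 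You instead observe that a $\Z_d$-invariant positive $d$-section is literally the \emph{full} preimage $q^{-1}(\oSigma)$ --- so over a fixed $1$-section the lift is unique on the nose, not merely up to isotopy --- and then transport via Proposition \ref{prop:1section}, isotopy extension, and homotopy lifting, with uniqueness of lifts forcing the ambient isotopy of $S^3$ to commute with the deck action $\Psi_{2\pi/d}$ and hence to carry full preimages to full preimages; your verifications here (the loop near a positive boundary is homotopic to $\mp h'$, which generates $\pi_1(L(d,1))\cong\Z_d$, so the pullback cover is connected and each boundary fibre lifts to a single circle; $\chi(\Sigma)=d\,\chi(\oSigma)=d(2-d)$ gives $g=(d-1)(d-2)/2$) are all sound, and the worry you flag at the end is in fact resolved by your own equivariant-lifting step. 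What each approach buys: yours is more conceptual and robust, replacing the case-by-case rigidity analysis with covering-space formalism, and it delivers connectedness, the boundary count, and the genus (hence the `only if' part of Proposition \ref{prop:Sigmagd} without Riemann--Hurwitz, exactly as the paper's remark demands) in one stroke; the paper's argument is more elementary and self-contained, invoking no ambient-isotopy extension or lifting theorems, and it makes the geometry of the lifted surface completely explicit, which the paper then reuses when reading off the asymptotics of algebraic curves in Sections \ref{section:curves} and \ref{section:buildings}. One presentational point: your appeal to ``the Corollary to Proposition \ref{prop:d-invariant}'' for the boundary count in the existence step is circular-sounding as written (that corollary is itself derived from the classification), but this is harmless since your monodromy argument independently establishes the $d$ boundary components.
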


\begin{proof}
Let $\Sigma$ be a positive $d$-section. Given two distinct points
$x,y\in\Sigma$, consider their images $\ox,\oy\in S^2$ under
the Hopf projection $S^3\rightarrow S^2$. Join either of $\ox$ and $\oy$
by a path in $S^2$ to the base point $\oz$ of a boundary fibre of~$\Sigma$
(such a boundary exists, as observed at the beginning of
Section~\ref{section:hopf-flow}). These paths lift to paths in $\Sigma$
joining both $x$ and $y$ with the component of $\partial\Sigma$
over~$\oz$. This proves that $\Sigma$ is connected.

An example of a positive $d$-section with the claimed topological
properties was exhibited above, so it only remains to prove
uniqueness up to isotopy. 
Given two positive $d$-sections, by Proposition~\ref{prop:1section}
we may assume, after an isotopy, that they project to the same $1$-section
in~$L(d,1)$. In particular, the $d$ boundary fibres of the two surfaces
sit over the same $d$ points in $S^2$. Therefore it suffices to
show that there are no choices, up to isotopy, in the construction
of a positive $d$-section we described.

The $d$ lifted $d$-helicoids near the boundary fibres are
determined by $1$-helicoids of the $1$-section in $L(d,1)$.
The $d-2$ bands connecting $d-1$ of these helicoids in
$L(d,1)$ into a chain lift uniquely to $d$ times $d-2$ bands
in~$S^3$ as in Figure~\ref{figure:three-covering}:
start with a helicoid at the end of the chain and look at the
$d$ lifted bands to the neighbouring helicoid. Shifting this
helicoid along the fibre by a suitable multiple of $2\pi/d$ will
make the bands `horizontal', so we obtain the standard picture
as shown in Figure~\ref{figure:three-covering}.

In the remaining construction we join this partial $d$-section
by an annulus with the helicoid around the last boundary fibre.
Again, there are no choices up to isotopy.
\end{proof}

\section{Complex projective curves}
\label{section:curves}
In this section we study how algebraic curves $C\subset\CP^2$ give rise
to surfaces of section $\Sigma\subset S^3$ for the Hopf flow. The surface
$\Sigma$ is obtained, under suitable assumptions on~$C$,
by radially projecting the affine part $\Ca:=C\cap\C^2$
of the algebraic curve, with the origin $(0,0)\in\C^2$ removed if it happens
to lie on~$\Ca$, onto the unit sphere $S^3\subset\C^2$.

Intersection points of $C$ with the complex projective line at
infinity will correspond to positive boundary components of~$\Sigma$.
If $\Ca$ avoids the origin in~$\C^2$, there will be no negative
boundaries; if $(0,0)\in\Ca$, this will give rise to negative boundaries.
\subsection{Projecting to $S^3$}
\label{subsection:projecting}
For $(a,b)\in S^3\subset\C^2$,
the Hopf circle $\Psi_t(a,b)$, $t\in\R/2\pi\Z$, in $S^3$
is the image of the punctured radial complex plane
\[ P_{a,b}:=\bigl\{(az,bz)\co z\in\C^*\bigr\}\]
under the radial projection $\C^2\setminus\{(0,0)\}\rightarrow S^3$.
Thus, in order to show that the projection of an algebraic curve 
$\Ca\subset\C^2\setminus\{(0,0)\}$ to $S^3$ intersects a Hopf circle
in $d$ distinct points, we need to show that $\Ca$ intersects
the corresponding plane $P_{a,b}$ in $d$ distinct points,
no two of which lie on the same real ray
\[ \bigl\{r\rme^{\rmi t}(a,b)\co r\in\R^+\bigr\}.\]

Any complex plane $P$ through the origin in $\C^2$ determines a
point $P_{\infty}$ in the complex projective line $\CP_{\infty}^1$ at
infinity and vice versa. In order to show that the projection of $\Ca$
to $S^3$ is a positive $d$-section for the Hopf flow, we need to verify
that the projected surface in $S^3$ becomes asymptotic to the
Hopf circles $P\cap S^3$ corresponding to the intersection
points $P_{\infty}\in C\cap\CP_{\infty}^1$ (and there should be no
intersection points of $P$ with $\Ca$ in this case). The positivity
of the $d$-section is ensured by the positivity of complex
intersections.

In order to understand the asymptotic behaviour of (not necessarily
positive) surfaces of
section near their boundary, it will be useful not to look at the
projection of $\Ca\setminus\{(0,0)\}$ to~$S^3$, but rather to regard
$\Ca\setminus\{(0,0)\}$ as a surface in $\R\times S^3$ under
the identification of $\C^2\setminus\{(0,0)\}$ with the
symplectisation $\bigl(\R\times S^3,\rmd(\rme^{2s}\alphast)\bigr)$
of~$(S^3,\alphast)$, with $\alphast$ as in~(\ref{eqn:alphast}).
This identification is given by sending
the flow lines of the radial vector field
\[ X=\frac{1}{2}(x_1\partial_{x_1}+y_1\partial_{y_1}+
x_2\partial_{x_2}+y_2\partial_{y_2})\]
on $\C^2$ to those of~$\partial_s/2$.
The vector field $X$ is a Liouville vector field for the standard symplectic
form $\omegast=\rmd x_1\wedge\rmd y_1+\rmd x_2\wedge\rmd y_2$
on~$\C^2$, that is, $L_X\omegast=\omegast$, and it is homothetic for
the standard metric. Therefore, the described identification
of $\C^2\setminus\{(0,0)\}$ with $\R\times S^3$ sends the complex structure
on $\C^2$ to the standard almost complex structure $J$ on the
symplectisation. This $J$ preserves the contact structure $\ker\alphast$
and, with $\Rst$ denoting the Reeb vector field of~$\alphast$,
it satisfies $J\partial_s=\Rst$, since $\rmi X=\Rst/2$ along $S^3\subset\C^2$.

\subsection{Homogeneous affine polynomials}
We begin with the simple situation that the affine curve $\Ca=C\cap\C^2$
is described by a homogeneous polynomial of degree~$d$.
We write $[z_0:z_1:z_2]$ for the homogeneous coordinates on $\CP^2$.

\begin{prop}
\label{prop:complex-d}
The complex projective curve $C=\{F=0\}\subset\CP^2$, where $F$ is a
complex polynomial of the form
\[ F(z_0,z_1,z_2)=f(z_1,z_2)-z_0^d,\]
with $f\neq 0$ a homogeneous polynomial of degree~$d$,
defines a positive $d$-section for the Hopf flow on $S^3$ if and only
if $F$ is non-singular.
\end{prop}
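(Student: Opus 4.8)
The plan is to analyse the radial projection $\Sigma\subset S^3$ of $\Ca=C\cap\C^2$ directly, reducing everything to the single equation cutting out $\Ca$ on each radial plane. In the chart $z_0=1$ we have $\Ca=\{f(z_1,z_2)=1\}$; since $f$ is homogeneous of positive degree, $(0,0)\notin\Ca$, so $\Sigma$ can only acquire positive boundaries. For $(a,b)\in S^3$ the intersection $\Ca\cap P_{a,b}$ is governed by $z^d f(a,b)=1$. When $f(a,b)\neq0$ this has exactly $d$ roots, all of the same modulus with arguments spaced by $2\pi/d$; as the radial projection of $(az,bz)$ depends only on $\arg z$, these yield $d$ distinct points on the Hopf circle, no two on the same ray. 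Thus every such fibre meets $\Int(\Sigma)$ in exactly $d$ points, and the same count shows the projection is injective, hence an embedding. That these intersections are positive I would deduce from the symplectisation picture of Section~\ref{subsection:projecting}: $\Ca$ is $J$-holomorphic and $J\partial_s=\Rst$, so $\Int(\Sigma)$ is positively transverse to the Reeb flow.

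Next I would identify the boundary fibres and tie them to non-singularity. The fibres missed by $\Int(\Sigma)$ are exactly those with $f(a,b)=0$, i.e.\ those over the roots of the degree-$d$ form $f$ on $\CP_{\infty}^1$, equivalently over $C\cap\CP_{\infty}^1$. A gradient computation pins down the singularities of $C$: from $\partial F/\partial z_0=-dz_0^{d-1}$ any singular point has $z_0=0$, where $\partial F/\partial z_i=\partial f/\partial z_i$ must vanish, and Euler's identity then forces $f=0$ as well. Hence $C$ is singular precisely at the multiple roots of $f$, so $F$ is non-singular if and only if $f$ has $d$ distinct roots, i.e.\ if and only if $\Sigma$ has exactly $d$ boundary fibres.

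The core of the forward implication is the asymptotic analysis at a simple root $(a_0,b_0)$. Writing $f=\ell\cdot g$ with $\ell$ the linear form vanishing at $(a_0,b_0)$ and $g(a_0,b_0)\neq0$, and splitting $(z_1,z_2)=z(a_0,b_0)+w(a_0',b_0')$ into the radial and a transverse complex direction, the equation $f=1$ gives $w\sim(cg(a_0,b_0))^{-1}z^{-(d-1)}$ as $|z|\to\infty$, with $c=\ell(a_0',b_0')\neq0$. So $\Ca$ is asymptotic to $P_{a_0,b_0}$ and $\Sigma$ limits onto the corresponding Hopf circle. As the flow acts diagonally by $(z,w)\mapsto(\rme^{\rmi t}z,\rme^{\rmi t}w)$, the transverse base coordinate is $\zeta=w/z\sim(\mathrm{const})z^{-d}$, so $\arg\zeta$ decreases by $2\pi d$ as $\arg z$ runs once around the fibre. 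The boundary of $\Sigma$ on the boundary of a tubular neighbourhood of the fibre therefore represents $\pm(d\mu-h)$: a left-handed $d$-fold helicoid of a positive boundary fibre (compare Figure~\ref{figure:pl-helix} and Section~\ref{section:pos-d}). This exhibits $\Sigma$ as a positive $d$-section with $d$ boundary components.

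For the converse I would argue by contraposition. If $F$ is singular, then $f$ has a multiple root and hence fewer than $d$ distinct roots, so $\Sigma$ has fewer than $d$ boundary fibres; by the corollary to Proposition~\ref{prop:d-invariant} a positive $d$-section has exactly $d$ boundary components, so $\Sigma$ is not one. Concretely, near a root of multiplicity $m\geq2$ the same computation yields $w^m\sim(\mathrm{const})z^{-(d-m)}$, whose $m$ branches obstruct an embedded helicoidal boundary. I expect the main obstacle to be precisely this asymptotic step: upgrading the leading-order estimate for $w$ to the statement that, up to and including the limiting fibre, the projection is a smoothly embedded surface-with-boundary whose boundary winds exactly $d$ times and is positively oriented --- controlling the error terms and ruling out stray intersections with the limiting plane --- rather than merely reading off the dominant power of $z$.
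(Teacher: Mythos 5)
Your proposal is correct and follows essentially the same route as the paper: counting intersections of $\Ca$ with the radial planes via $f(a,b)z^d=1$, identifying non-singularity of $F$ with $f$ having $d$ distinct roots on $\CP^1$, and verifying the left-handed $d$-fold helicoidal asymptotics at the fibres over simple roots, with positivity coming from complex (i.e.\ $J$-holomorphic) intersection positivity. The only cosmetic differences are that you extract the boundary winding from the local expansion $w\sim z^{-(d-1)}$ (equivalently $\zeta=w/z\sim z^{-d}$) along the curve, where the paper instead tracks the monodromy $w_{\theta+2\pi}=\rme^{-2\pi\rmi/d}w_{\theta}$ of the $d$ intersection points over a small circle in the base, and that your converse explicitly invokes the corollary that a positive $d$-section has exactly $d$ boundary components, a fact the paper uses only implicitly; both variants operate at the same level of rigour, including the leading-order asymptotic control you flag as the delicate step.
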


\begin{proof}
We first determine the intersection points of the affine part
\[ \Ca=\biggl\{(z_1,z_2)\in\C^2\co f(z_1,z_2)=1\biggr\} \]
(which does not contain the origin $(0,0)\in\C^2$)
with the radial planes $P_{a,b}$ in $\C^2$.
These intersection points are given by the equation
\[ f(a,b)z^d=1.\]
If $f(a,b)\neq 0$, this equation has $d$ solutions~$z$, related by
multiplication by a power
of the $d^{\mathrm{th}}$ root of unity. Otherwise, there are no
solutions.

The partial derivatives of $F$ are given by
\[ \frac{\partial F}{\partial z_0}=-dz_0^{d-1},\;\;\;
\frac{\partial F}{\partial z_1}=\frac{\partial f}{\partial z_1},\;\;\;
\text{and}\;\;\;
\frac{\partial F}{\partial z_2}=\frac{\partial f}{\partial z_2}.\]
For $z_0\neq 0$ we have $\partial F/\partial z_0\neq 0$,
so the affine part is always non-singular.

We now look at the points of $C$ in
\[ \CP_{\infty}^1=\bigl\{[z_0:z_1:z_2]\in\CP^2\co z_0=0\bigr\}.\]
Solutions of $F=0$ of the form $[0:a:b]$ are determined by the
equation $f(a,b)=0$. In other words, a point at infinity
lies on $C$ precisely when $C$ does \emph{not} intersect the
radial plane in $\C^2$ determined by that point.

Now, the projection of $\Ca$ to $S^3$ extends to a positive $d$-section
precisely when it is asymptotic to $d$ distinct Hopf orbits.
This amounts to saying that the equation $f(a,b)=0$ should have
$d$ distinct solutions $[a:b]\in\CP^1$, which is equivalent to
$f$ being non-singular. This, in turn, is equivalent to $F$
being non-singular.

It remains to check that $\Ca$ has the correct asymptotic behaviour
near these $d$ Hopf orbits. Let $[a_1:b_1]\in\CP^1$ be a solution
of $f(a,b)=0$. We may assume without loss of generality that $a_1\neq 0$.
For a small $\varepsilon>0$ the curve
\[ \theta\longmapsto[a_{\theta}:b_{\theta}]:=
[a_1:b_1+\varepsilon a_1\rme^{\rmi\theta}],\;\;\;\theta\in S^1=\R/2\pi\Z,\]
describes a circle in $\CP^1$ around the point $[a_1:b_1]$.

The points of $\Ca$ (projected to~$S^3$) in the Hopf fibre
over $[a_{\theta}:b_{\theta}]$ are given by
the solutions $w_{\theta}$ of the equation
$f(a_{\theta},b_{\theta})w_{\theta}^d=1$, and then radially projecting the
points $(a_{\theta}w_{\theta},b_{\theta}w_{\theta})$ to~$S^3$.
As $\theta$ makes one full turn in~$S^1$, the function
$\arg\bigl(f(a_{\theta},b_{\theta})\bigr)$ likewise makes one
complete turn, provided $\varepsilon>0$ is
sufficiently small. This can be seen by factorising $f$ as
\[ f(z_1,z_2)=(b_1z_1-a_1z_2)\cdots(b_dz_1-a_dz_2)\]
with the $a_j,b_j$ describing
$d$ distinct points $[a_j:b_j]\in\CP^1$.

Thus, if we choose a solution $w_0$ and then define
$w_{\theta}$, $\theta\in\R$, continuously in $\theta$, we have
$w_{\theta+2\pi}=\rme^{-2\pi\rmi/d}w_{\theta}$. This guarantees
that the projection of $\Ca$ to $S^3$ does indeed look like
a left-handed $d$-fold helicoid about a Hopf fibre near each of its $d$
boundary components.
\end{proof}

In particular, for $d=1$ the polynomial $F$ describes
a projective line $L\neq\CP^1_{\infty}$, since $f\neq 0$.
This line has a single point at infinity, and the projection of
the affine part $L_{\mathrm{a}}=L\cap\C^2$ to $S^3$ defines
a disc-like $1$-section for the Hopf flow.

\begin{rem}
\label{rem:asymptote}
For the final part of the proof of Proposition~\ref{prop:complex-d},
the asymptotic behaviour near the boundary components, one may
also look at the behaviour of $\Ca$ near $s=\infty$ under
the identification of $\C^2\setminus\{(0,0)\}$ with $\R\times S^3$
described in Section~\ref{subsection:projecting}. The tangent spaces
of $\Ca$ contain vectors getting closer and closer to $\partial_s$
as we approach $s=\infty$, and hence also tangent vectors close
to the Reeb vector field $\Rst=J\partial_s$. This suffices to see that
$\Ca$ becomes asymptotic to a Reeb orbit, but it does not guarantee,
as our \emph{ad hoc} argument does, that this orbit will only be simply
covered.
\end{rem}
\subsection{Algebraic curves giving rise to $1$-sections}
We next want to describe a class of homogeneous polynomials
$F(z_0,z_1,z_2)$ of degree $d$ that give rise to $1$-sections
for the Hopf flow with $d$ positive and $d-1$ negative boundary
components.

Write $f_k(z_1,z_2)$ for a non-zero homogeneous polynomial
of degree~$k$. As in the previous section, we can factorise
this as
\[ f_k(z_1,z_2)=c_k(b_1^kz_1-a_1^kz_2)\cdots(b_k^kz_1-a_k^kz_2)\]
with $c_k\in\R^+$ and $(a_j^k,b_j^k)\neq (0,0)$. The factor $c_k$
in this expression allows us to assume without loss of
generality that the $(a_j^k,b_j^k)$ lie in $S^3\subset\C^2$.

The following is easy to see.

\begin{lem}
\label{lem:cylinder}
Let $C=\{F=0\}\subset\CP^2$ be the algebraic curve defined by
\[ F(z_0,z_1,z_2)=f_d(z_1,z_2).\]
With notation as above, we assume
that the $[a_j^d:b_j^d]\in\CP^1$ are pairwise distinct
for $j=1,\ldots, d$. Then $\Ca\setminus\{(0,0)\}\subset\R\times S^3$
defines a collection of $d$ cylinders $\R\times\gamma$ over the Hopf fibres
$\gamma$ through the points $(a_j^d,b_j^d)\in S^3$.
\qed
\end{lem}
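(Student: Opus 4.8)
The plan is to translate the algebraic factorisation of $f_d$ directly into the symplectisation picture set up in Section~\ref{subsection:projecting}. Working in the affine chart $z_0=1$, the curve is
\[
\Ca=\bigl\{(z_1,z_2)\in\C^2\co f_d(z_1,z_2)=0\bigr\},
\]
and it contains the origin, since $f_d$ is homogeneous of positive degree. First I would feed in the factorisation
\[
f_d(z_1,z_2)=c_d(b_1^dz_1-a_1^dz_2)\cdots(b_d^dz_1-a_d^dz_2)
\]
to read off that the vanishing locus is the union $\Ca=\bigcup_{j=1}^{d}L_j$ of the complex lines $L_j=\{b_j^dz_1-a_j^dz_2=0\}$ through the origin. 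The assumption that the points $[a_j^d:b_j^d]\in\CP^1$ are pairwise distinct then ensures that the $L_j$ are pairwise distinct, meeting only at $(0,0)$.

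The next step is to recognise each punctured line as a radial plane. One has $b_j^dz_1-a_j^dz_2=0$ precisely when $(z_1,z_2)=(a_j^dw,b_j^dw)$ for some $w\in\C$, so that
\[
L_j\setminus\{(0,0)\}=\bigl\{(a_j^dw,b_j^dw)\co w\in\C^*\bigr\}=P_{a_j^d,b_j^d}
\]
in the notation of Section~\ref{subsection:projecting}; here I use the normalisation $(a_j^d,b_j^d)\in S^3$. By the description of $P_{a,b}$ there, the radial projection of $P_{a_j^d,b_j^d}$ to $S^3$ is exactly the Hopf fibre $\gamma_j$ through $(a_j^d,b_j^d)$. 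Finally I would invoke the identification of $\C^2\setminus\{(0,0)\}$ with the symplectisation $\R\times S^3$ from the same section: writing a nonzero point as $r\,\Psi_\theta(a_j^d,b_j^d)$ with $r>0$ (i.e.\ $w=r\rme^{\rmi\theta}$), the radial coordinate becomes the $\R$-factor and the angular motion $\theta\mapsto\Psi_\theta(a_j^d,b_j^d)$ sweeps out $\gamma_j$, so $P_{a_j^d,b_j^d}$ maps diffeomorphically onto the cylinder $\R\times\gamma_j$. Hence $\Ca\setminus\{(0,0)\}$ is the disjoint union of the $d$ cylinders $\R\times\gamma_j$, which is the assertion.

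I expect no genuine obstacle, in line with the statement being flagged as easy to see. The only point deserving a little care is the bookkeeping in the identification $\C^2\setminus\{(0,0)\}\cong\R\times S^3$: one must check that the radial Liouville direction becomes the $\R$-factor while the circle action $\theta\mapsto\Psi_\theta$ traces out the full Hopf fibre. Both are immediate from the explicit form of the radial vector field $X$ and of the Hopf flow, so the argument really amounts to combining the factorisation with the constructions already in place.
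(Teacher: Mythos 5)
Your proof is correct and is precisely the argument the paper intends: the lemma is stated with ``The following is easy to see'' and no written proof, and your details---factorising $f_d$ into linear factors, identifying each punctured line $L_j\setminus\{(0,0)\}$ with the radial plane $P_{a_j^d,b_j^d}$, and using the identification $\C^2\setminus\{(0,0)\}\cong\R\times S^3$ from Section~\ref{subsection:projecting} to turn each such plane into the cylinder $\R\times\gamma_j$---simply make explicit the constructions already set up there. Your closing care about the bookkeeping (radial Liouville direction becoming the $\R$-factor, the circle $w=r\rme^{\rmi\theta}$ sweeping out the Hopf fibre) is exactly the right check, and it goes through as you say.
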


Next we look at polynomials defined by a pair $f_d,f_{d-1}$.

\begin{prop}
\label{prop:complex-d-1}
Let $F$ be a homogeneous complex polynomial of degree $d$ of the form
\[ F(z_0,z_1,z_2)=f_d(z_1,z_2)+z_0f_{d-1}(z_1,z_2),\]
and $C=\{F=0\}\subset\CP^2$.
With notation as above, we assume that the $[a_j^k:b_j^k]\in\CP^1$
are pairwise distinct for $k\in\{d-1,d\}$ and $1\leq j\leq k$.
Then the projection of $\Ca\setminus\{(0,0)\}$ to $S^3$ defines
a $1$-section for the Hopf flow with $d$ positive and $d-1$
negative boundary components, given by the Hopf fibres through
the points $(a_j^d,b_j^d)$ and $(a_j^{d-1},b_j^{d-1})$,
respectively.
\end{prop}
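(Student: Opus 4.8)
The plan is to follow the strategy of the proof of Proposition~\ref{prop:complex-d}, adapting it to the extra term $z_0f_{d-1}$, which forces the origin onto $\Ca$ and thereby produces the negative boundaries. First I would write the affine equation (in the chart $z_0=1$) as $\Ca=\{f_d(z_1,z_2)+f_{d-1}(z_1,z_2)=0\}$ and intersect it with a radial plane $P_{a,b}$. Substituting $(z_1,z_2)=(az,bz)$ and using homogeneity gives
\[ z^{d-1}\bigl(z\,f_d(a,b)+f_{d-1}(a,b)\bigr)=0. \]
Since the origin is excluded we need $z\neq 0$, so the relevant factor is linear in $z$ with the unique solution $z=-f_{d-1}(a,b)/f_d(a,b)$, which is defined and non-zero precisely when $f_d(a,b)\neq 0$ and $f_{d-1}(a,b)\neq 0$. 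Hence every Hopf fibre over a point of $\CP^1$ distinct from the $2d-1$ special points $[a_j^d:b_j^d]$ and $[a_j^{d-1}:b_j^{d-1}]$ meets $\Ca\setminus\{(0,0)\}$ in exactly one point, and a fortiori no two intersection points can lie on the same ray. Because $z$ depends holomorphically on $[a:b]$ over this generic locus, the resulting image in $S^3$ is the graph of a smooth section of the Hopf fibration over $S^2$ minus the $2d-1$ special points; this shows at once that the projection is an embedded surface whose interior is transverse to the Reeb flow, i.e.\ a $1$-section away from the special fibres, and it sidesteps any concern about singularities of $\Ca$.

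Next I would identify the boundary fibres by tracking where the single intersection point escapes. As $[a:b]\to[a_j^d:b_j^d]$, a zero of $f_d$, the solution $z\to\infty$, so the intersection point runs off to infinity in~$\C^2$, i.e.\ to $s=+\infty$ in the symplectisation $\R\times S^3$ of Section~\ref{subsection:projecting}; these $d$ fibres are the positive ends. As $[a:b]\to[a_j^{d-1}:b_j^{d-1}]$, a zero of $f_{d-1}$, the solution $z\to 0$, so the point converges to the removed origin, i.e.\ $s=-\infty$; these $d-1$ fibres are the negative ends. This already reproduces the count of $d$ positive and $d-1$ negative boundary components over the asserted Hopf fibres.

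The main work, exactly as in Proposition~\ref{prop:complex-d}, is the asymptotic analysis confirming that each boundary orbit is simply covered and carries the correct sign, and this is where I expect the real content to lie. I would fix a small positively oriented circle $\theta\mapsto[a_\theta:b_\theta]$ about one special point and follow the phase of $z_\theta=-f_{d-1}(a_\theta,b_\theta)/f_d(a_\theta,b_\theta)$. Near a simple zero of $f_d$ (with $f_{d-1}$ non-vanishing there, which is precisely where the joint distinctness of the $2d-1$ points is used), $\arg f_d$ increases by $2\pi$ while $\arg f_{d-1}$ does not wind, so $\arg z_\theta$ decreases by $2\pi$; comparing with Figures~\ref{figure:helicoid} and~\ref{figure:smoothen}, this is the left-handed helicoid making one full turn, hence a positive boundary. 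Near a simple zero of $f_{d-1}$ the roles reverse, $\arg z_\theta$ increases by $2\pi$, and one obtains the opposite, right-handed helicoid, hence a negative boundary. The delicate point throughout is keeping the signs and the single winding straight at both the $s=+\infty$ and $s=-\infty$ ends; once that is done, the remaining smoothening and gluing is the bookkeeping already rehearsed for Proposition~\ref{prop:complex-d}.
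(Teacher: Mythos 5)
Your proposal is correct and takes essentially the same route as the paper's proof: the same reduction to the equation $zf_d(a,b)+f_{d-1}(a,b)=0$ on the radial planes $P_{a,b}$, giving a unique intersection point away from the $2d-1$ pairwise distinct special fibres, followed by the same winding analysis of $\arg\bigl(-f_{d-1}(a_\theta,b_\theta)/f_d(a_\theta,b_\theta)\bigr)$ around the simple zeros of $f_d$ and $f_{d-1}$ to identify left-handed (positive) and right-handed (negative) helicoidal ends. Your graph-of-a-section framing, which sidesteps the paper's explicit one-line non-singularity check, and your tracking of the intersection point escaping to $s=\pm\infty$ (which the paper records separately in Remark~\ref{rem:complex-d-1}) are only cosmetic variations on the same argument.
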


\begin{proof}
Observe that $C$ is non-singular, since
a common zero $[z_0:z_1:z_2]$ of $F$ and $\partial F/\partial z_0=f_{d-1}$
would also have to be a zero of~$f_d$, which our assumptions rule out.

The case $d=1$ is covered by Proposition~\ref{prop:complex-d}, so we
assume $d\geq 2$ from now on.

There are $d$ distinct points
at infinity on the curve~$C$, as $C\cap\CP^1_{\infty}$ is given
by the equation $f_d=0$. The intersection of $\Ca$ with
a (punctured) radial plane $P_{a,b}$ is described by the equation
\[ f_d(a,b)z+f_{d-1}(a,b)=0,\;\; z\neq 0.\]
There are no solutions if $f_d(a,b)=0$, since this would force
a common zero with $f_{d-1}$. Likewise, there is no solution
if $f_{d-1}(a,b)=0$. For $f_d(a,b),f_{d-1}(a,b)\neq 0$, there
is a unique intersection point of $\Ca$ with~$P_{a,b}$.
This proves that $\Ca\setminus\{(0,0)\}$ projects to
a $1$-section for the Hopf flow away from the Hopf fibres
over the points $[a_j^k:b_j^k]$.

For the asymptotic behaviour near these fibres, we
consider a small circle $\theta\mapsto
[a_{\theta}:b_{\theta}]\in\CP^1$ around a solution $[a:b]$
of $f_d=0$ or $f_{d-1}=0$, as in the proof of
Proposition~\ref{prop:complex-d}. The point of the $1$-section in the Hopf
fibre over $[a_{\theta}:b_{\theta}]$ is given by radially projecting the
point $(a_{\theta}w_{\theta},b_{\theta}w_{\theta})$ to~$S^3$,
with $w_{\theta}$ determined by
\[ w_{\theta}=
-\frac{f_{d-1}(a_{\theta},b_{\theta})}{f_d(a_{\theta},b_{\theta})}.\]
As we encircle a zero of $f_d$, the argument of $w_{\theta}$ makes
one negative rotation; around a zero of~$f_{d-1}$, a positive one.
Thus, near these fibres the $1$-section looks like a left-handed
resp.\ right-handed helicoid.
\end{proof}

\begin{rem}
\label{rem:complex-d-1}
(1) By Proposition~\ref{prop:1section}, the $1$-sections found in
Proposition~\ref{prop:complex-d-1} are of genus~$0$.

(2) For the asymptotic behaviour of $\Ca\setminus\{(0,0)\}\subset
R\times S^3$ near $s=-\infty$ we may alternatively observe that
as $\Ca\ni(z_1,z_2)\rightarrow (0,0)$, the surface becomes asymptotic
to the surface given by $f_{d-1}=0$, which by Lemma~\ref{lem:cylinder}
is a cylinder over a Hopf fibre.
The same caveat as in Remark~\ref{rem:asymptote} applies.
\end{rem}

\section{Holomorphic buildings and the degree-genus formula}
\label{section:buildings}
In this section we present two proofs of the degree-genus formula.

\begin{thm}
\label{thm:dg}
Any non-singular algebraic curve $C\subset\CP^2$ of degree $d$
is homeomorphic to a closed, connected orientable surface of genus
$g=(d-1)(d-2)/2$.
\end{thm}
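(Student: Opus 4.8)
The plan is to connect the general theorem to the special cases already established, using a deformation argument. The key insight from the earlier sections is Proposition~\ref{prop:complex-d}, which shows that a curve of the special form $f(z_1,z_2)-z_0^d$ (with $f$ non-singular homogeneous of degree $d$) projects to a positive $d$-section for the Hopf flow, and Theorem~\ref{thm:d-section}, which tells us that any positive $d$-section has genus $(d-1)(d-2)/2$. So the strategy breaks into two pieces: first handle the special curves directly, then argue that a general non-singular degree-$d$ curve is diffeomorphic to one of these special curves.

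First I would dispose of the special case. For a curve $C=\{f(z_1,z_2)-z_0^d=0\}$ with $f$ non-singular, Proposition~\ref{prop:complex-d} says the projection of the affine part $\Ca$ to $S^3$ is a positive $d$-section $\Sigma$. This $\Sigma$ is obtained from $C$ by radial projection, and the closed curve $C$ is recovered by capping off the $d$ boundary components of $\Sigma$ with discs (these correspond to small neighbourhoods of the $d$ points at infinity). Therefore $C$ is homeomorphic to the closed surface obtained from $\Sigma$ by attaching $d$ discs. By Theorem~\ref{thm:d-section}, $\Sigma$ has genus $(d-1)(d-2)/2$ and $d$ boundary components; capping off the boundaries yields a closed connected orientable surface of the same genus, namely $g=(d-1)(d-2)/2$. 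This settles the theorem for the special curves.

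Second, I would reduce the general case to the special case. The space of non-singular degree-$d$ curves in $\CP^2$ is connected: the singular curves form a complex hypersurface (the zero set of the discriminant) in the projective space $\CP^N$ of all degree-$d$ curves, so its complement is connected (indeed path-connected, as the complement of a complex subvariety of positive codimension in a projective space is connected). Any non-singular curve can thus be joined to a chosen reference curve — for instance a Fermat-type curve of the special form above — by a path of non-singular curves. Along such a path the curves form a smooth family, all diffeomorphic to one another by Ehresmann's fibration theorem applied to the total space fibred over the path (the non-singularity guarantees the family is a submersion onto the parameter interval, hence a locally trivial fibre bundle). Consequently every non-singular degree-$d$ curve is diffeomorphic to the special curve, and hence has genus $(d-1)(d-2)/2$.

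The main obstacle is the second step, specifically making the deformation argument rigorous while staying within the elementary, self-contained spirit of the paper. One must verify that the special reference curve is genuinely non-singular (choosing $f$ with distinct linear factors suffices, as in Proposition~\ref{prop:complex-d}), and one must invoke connectedness of the discriminant complement together with Ehresmann's theorem — results that are standard but external to the Hopf-flow machinery developed here. The authors flag that they will give a second, more instructive SFT-type degeneration proof that interprets the genus as $\sum_{k=1}^{d-2}k$ intrinsically; the alternative I have sketched is the direct route via the classification of $d$-sections, and the delicate point is ensuring the radial projection of a non-singular special curve really does produce an \emph{embedded} positive $d$-section to which Theorem~\ref{thm:d-section} applies.
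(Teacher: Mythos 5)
Your proposal is correct and takes essentially the same route as the paper's first proof: the special curves $f(z_1,z_2)-z_0^d$ are handled via Proposition~\ref{prop:complex-d} and Theorem~\ref{thm:d-section}, with the $d$ ends capped off by discs, and the general case is reduced to this one by connectedness of the space of non-singular degree-$d$ curves together with deformation invariance of the genus (the paper gets connectedness from Bertini's theorem via the Veronese embedding and merely asserts genus invariance, whereas you phrase it through the discriminant hypersurface and make the invariance explicit with Ehresmann's theorem --- a cosmetic, not substantive, difference). The embeddedness worry you raise at the end is already settled by Proposition~\ref{prop:complex-d} itself, so no gap remains.
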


One proof only uses the classification of $d$-sections for the
Hopf flow. The second proof uses degenerations of complex algebraic
curves into holomorphic buildings in the sense of symplectic field
theory. This second proof yields an explanation of the degree-genus formula
as a sum $\sum_{k=1}^{d-2}k$. Either proof relies on the fact that,
as a consequence of Bertini's theorem~\cite[Lecture~17]{harr92},
the general (in the sense of~\cite[p.~53]{harr92})
algebraic curve of degree~$d$ in $\CP^2$ is non-singular.

The projective space of homogeneous polynomials $F(z_0,z_1,z_2)$ of
degree $d$ is of dimension $N=(d^2+3d)/2$, since there
are $(d+2)(d+1)/2$ monomials of degree $d$ in three variables.
There is an embedding $\CP^2\rightarrow\CP^N$ given by sending
the point $[z_0:z_1:z_2]$ to $[...:z^I:...]$, where $z^I$ ranges over
all monomials of degree~$d$ in three variables. The image of this
embedding is the Veronese variety~\cite[p.~23]{harr92}, which
is a smooth variety.

The algebraic curves of degree $d$ in $\CP^2$ are exactly the hyperplane
sections of the Veronese variety. To this description of
algebraic curves one can apply Bertini's theorem on the
smoothness of hyperplane sections to conclude that the subset
of non-singular algebraic curves in the space of all algebraic curves
of degree $d$ is open, dense, and connected. Under deformations
through non-singular curves, the topological genus is invariant.

A slightly more direct (and more sophisticated) argument can be based
on the version of Bertini's theorem proved in
\cite[Corollary~III.10.9]{hart77}. The projective space of
degree $d$ homogeneous polynomials in three variables (or the set
of divisors made up of the curves defined by these polynomials)
is a linear system (see also \cite[Section~1.1]{grha78} for
a discussion of linear systems more accessible to non-algebraic
geometers). This linear system is without base points, i.e.\
for every point in $\CP^2$ there is an algebraic curve of degree~$d$
\emph{not} containing the given point. Then Bertini's theorem says that
almost every element of this linear system, that is,
every element outside a lower-dimensional subvariety, is non-singular.

\begin{proof}[First proof of Theorem~\ref{thm:dg}]
The algebraic curves $C$ of degree $d$ in Proposition~\ref{prop:complex-d}
have $d$ distinct points at infinity, and their affine part $\Ca$
does not contain the origin. The projection of $\Ca$ to
$S^3$ defines a positive $d$-section. By Theorem~\ref{thm:d-section}
this means that, when viewed in $\R\times S^3$, the complex curve $\Ca$ is
topologically a connected, orientable surface of genus
$g=(d-1)(d-2)/2$ with $d$ ends asymptotic to cylinders over Hopf fibres.
The algebraic curve $C$ is obtained topologically by capping off these
ends with discs.

This proves the degree-genus formula for the algebraic curves
described in Proposition~\ref{prop:complex-d}. For the general case, it
suffices to appeal to the connectedness of the space of non-singular curves
of degree~$d$.
\end{proof}

Our second proofs illustrates the degeneration phenomena in
symplectic field theory.

\begin{proof}[Second proof of Theorem~\ref{thm:dg}]
After a projective transformation of $\CP^2$ we may assume that
$[1:0:0]\not\in C$. Then $C$ can be written as $\{F=0\}$ with $F$
of the form
\[ F(z_0,z_1,z_2)=f_d(z_1,z_2)+z_0f_{d-1}(z_1,z_2)+\cdots+
z_0^{d-1}f_1(z_1,z_2)+z_0^d.\]

By a small perturbation of $F$ we may assume that each $f_k$
has $k$ distinct zeros, and no adjacent pair
$f_k,f_{k-1}$ has zeros in common. In particular,
the intersection $C\cap\CP^1_{\infty}$ then consists of $d$
non-singular points, and we shall focus our attention on the
affine part~$\Ca$. As before, topologically the closed surface
$C$ is obtained by capping off the $d$ ends of $\Ca$ with discs.
Since the subspace of singular curves is of real
codimension~$2$ by Bertini's theorem, we may further assume that
the whole family
\begin{eqnarray*}
f^{\lambda}(z_1,z_2)
 & := & f_d(z_1,z_2)+\lambda f_{d-1}(z_1,z_2)+
          \lambda^3 f_{d-2}(z_1,z_2)+\cdots\\
 &    & \cdots+\lambda^{d(d-1)/2}f_1(z_1,z_2)
          +\lambda^{(d+1)d/2},\;\;\;\lambda\in(0,1],
\end{eqnarray*}
where the power of $\lambda$ multiplying $f_{d-k}$ is
$\sum_{j=0}^k j$,
consists of non-singular polynomials. Notice that none
of these curves $C^{\lambda}=\{f^{\lambda}=0\}$
contains the origin in~$\C^2$, so we may think
of this as a family of curves $C^\lambda\subset\R\times S^3$.

Our aim is to determine the topological genus of the affine curve
$\{f^1=0\}$, which is a curve with $d$ boundary components.
In the naive limit $\lambda\rightarrow 0$ we lose all topological
information, since by Lemma~\ref{lem:cylinder} the curve
$\{f_d=0\}\setminus\{(0,0)\}$ is simply a collection
of $d$ cylinders, for as $\lambda\rightarrow 0$, the topology of
$C^{\lambda}$ disappears towards $-\infty$ in $\R\times S^3$.

In the spirit of SFT~\cite{behwz03}, we now rescale the curve in different
ways during this limit process $\lambda\rightarrow 0$, which amounts to
zooming in at different parts of the curve to see its topology.
We first present the heuristic argument; details of the
convergence process will be discussed in Section~\ref{section:convergence}.

For the rescaling, we replace $(z_1,z_2)$ by $c_{\lambda}(z_1,z_2)$,
with judicious choices of the scaling factor~$c_{\lambda}$.
The rescaling leads to the family of polynomials
\[f^{\lambda}_*=c_{\lambda}^d f_d+\lambda c_{\lambda}^{d-1} f_{d-1}+
          \lambda^3 c_{\lambda}^{d-2} f_{d-2}+\cdots
+\lambda^{d(d-1)/2} c_{\lambda}f_1
          +\lambda^{(d+1)d/2}.\]

We now choose $c_{\lambda}=\lambda^k$ for some
$1\leq k\leq d$. Then the polynomials $f_{d-k+1}$ and
$f_{d-k}$ are multiplied by the same power
\[ k(d-k+1)+\sum_{j=0}^{k-1} j=k(d-k)+\sum_{j=0}^k j\]
of $\lambda$,
whereas all other summands contain a larger power of $\lambda$.
Hence, as $\lambda\rightarrow 0$ the rescaled polynomial
\[ f^\lambda_*/\lambda^{k(d-k)+\sum_{j=0}^k j}\]
converges to $f_{d-k+1}+f_{d-k}$, which for $k=d$ has to be read
as $f_1+1$. By Remark~\ref{rem:complex-d-1}~(1),
this defines a surface of genus $0$ with $d-k+1$ positive
and $d-k$ negative boundaries at $\pm\infty$, respectively,
in $\R\times S^3$.

As shown in Proposition~\ref{prop:complex-d-1},
the curve $\{f_{d-k+1}+f_{d-k}=0\}\setminus\{(0,0)\}$
is asymptotic to the Hopf fibres determined by the zeros
of $f_{d-k+1}$ and $f_{d-k}$ at $+\infty$ and $-\infty$, respectively.
Hence, these limits for the different choices of rescaling
$c_{\lambda}$ fit together into a holomorphic building in the sense
of SFT as shown in Figure~\ref{figure:building}.

\begin{figure}[h]
\labellist
\small\hair 2pt
\pinlabel $f_4+f_3=0$ [tl] at 468 450
\pinlabel $f_3+f_2=0$ [tl] at 468 286
\pinlabel $f_2+f_1=0$ [tl] at 468 125
\pinlabel $f_1+1=0$ [tl] at 468 22
\endlabellist
\centering
\includegraphics[scale=.5]{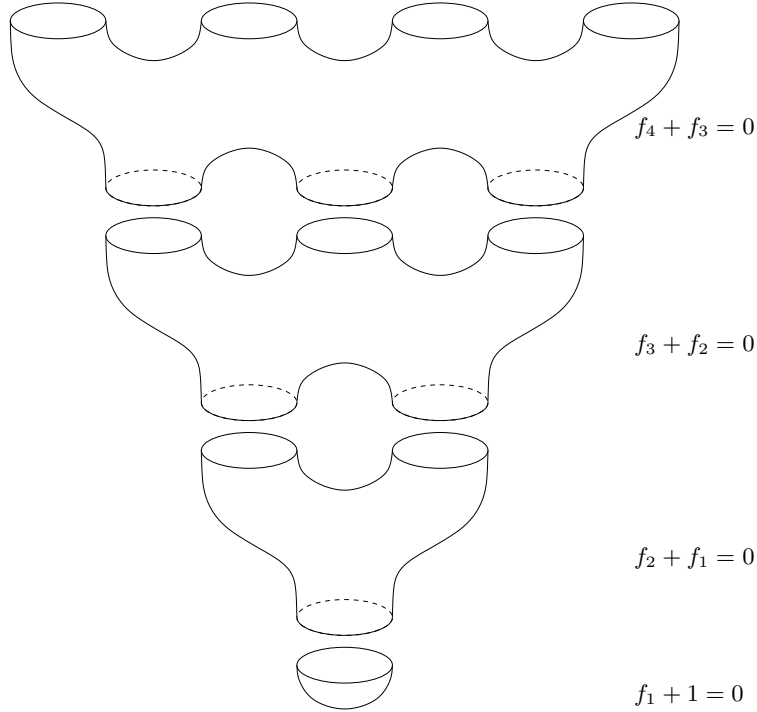}
  \caption{A holomorphic building from a degree $4$ curve.}
  \label{figure:building}
\end{figure}

Observe that intermediate rescalings only lead to trivial cylinders
over the boundary orbits and hence do not carry any additional topology.
For instance, if we choose $c_{\lambda}=\lambda^{3/2}$, then
\[ f^{\lambda}_*=\lambda^{3d/2}f_d+\lambda^{(3d-1)/2}f_{d-1}+
\lambda^{3d/2}f_{d-2}+\cdots,\]
and after rescaling only the polynomial $f_{d-1}$ will
survive in the limit. Then refer to Lemma~\ref{lem:cylinder}.

Thus, the genus of $C$ can be read off the holomorphic building
we obtain in this SFT limit. The individual levels carry no genus,
and the gluing of two adjacent levels adds $\#(\text{limit orbits})-1$
to the genus. We conclude that the genus of the curve $C$
of degree $d$ is given by $\sum_{k=1}^{d-2} k=(d-1)(d-2)/2$.
\end{proof}

\begin{ex}
Here is a concrete example that illustrates the essential aspects
in the following discussion of convergence. Suppose we would like
to understand the topology of the Fermat curve of degree~$3$,
\[ \bigl\{[z_0:z_1:z_2]\in\CP^2\co z_0^3+z_1^3+z_2^3=0\bigr\}.\]
We consider the affine part
\[ \bigl\{(z_1,z_2)\in\C^2\co z_1^3+z_2^3+1=0\bigr\}.\]
We now introduce terms of lower order and a family parameter~$\lambda$:
\[ f^{\lambda}(z_1,z_2)=z_1^3+z_2^3+\lambda(z_1^2+z_2^2)+
\lambda^3(z_1+z_2)+\lambda^6.\]
When we evaluate $f^{\lambda}$ at $(\lambda z_1,\lambda z_2)$, we
obtain
\[ f^{\lambda}(\lambda z_1,\lambda z_2)=
\lambda^3(z_1^3+z_2^3+z_1^2+z_2^2)+\lambda^4(z_1+z_2)+\lambda^6;\]
rescaling with $\lambda^2$ yields
\[ f^{\lambda}(\lambda^2 z_1,\lambda^2 z_2)=
\lambda^6(z_1^3+z_2^3)+\lambda^5(z_1^2+z_2^2+z_1+z_2)+\lambda^6;\]
the third rescaling to consider is
\[ f^{\lambda}(\lambda^3 z_1,\lambda^3 z_2)=
\lambda^9(z_1^3+z_2^3)+\lambda^7(z_1^2+z_2^2)+\lambda^6(z_1+z_2+1).\]
After dividing these polynomials by $\lambda^3,\lambda^5$ and $\lambda^6$,
respectively, we see that in the limit $\lambda\rightarrow 0$ we
obtain the respective polynomials
\[ z_1^3+z_2^3+z_1^2+z_2^2,\;\;\;
z_1^2+z_2^2+z_1+z_2,\;\;\;
z_1+z_2+1.\]
\end{ex}
\section{SFT convergence}
\label{section:convergence}
In this section we fill in the technical details of the
second proof of Theorem~\ref{thm:dg}.
\subsection{Convergence of submanifolds}
\label{subsection:conv-submanifold}
In order to understand the convergence of submanifolds defined
by equations, we consider the following general situation.
Let $M\subset\R^n$ be a compact submanifold of codimension $k$
defined globally by $k$ smooth functions $h_1,\ldots, h_k\co
\R^n\rightarrow\R$. This means that
\[ M=\{h_1=\ldots=h_k=0\},\]
with the gradient vector fields $\nabla h_1,\ldots,\nabla h_k$
pointwise linearly independent along the common zero set~$M$
of the~$h_i$. In particular, the normal bundle of $M$ is trivial,
and we find a tubular neighbourhood $\nu M$ of $M\subset\R^n$ diffeomorphic
to $M\times D^k$ such that at each point of $M\times D^k$
the orthogonal complement to the span of $\nabla h_1,\ldots,\nabla h_k$
is transverse to the $D^k$-factor.

We may assume that there is an $\varepsilon >0$ such that
at any point outside the tubular neighbourhood $\nu M$, at least one of
the functions $|h_i|$ takes a value larger than~$\varepsilon$.
Now let $d_1,\ldots,d_k\co\R^n\rightarrow [-1,1]$ be smooth functions.
Then, for $|\lambda|<\varepsilon$,
the common zero set of the functions $h_i+\lambda d_i$ lies
inside $\nu M$. By shrinking $\nu M$ and $\varepsilon$
we can ensure that the gradient vector fields
$\nabla h_i+\lambda\nabla d_i$ are pointwise linearly
independent on $\nu M$ for any $|\lambda|<\varepsilon$,
and the orthogonal complement to their span is transverse to the
$D^k$-factor.

Under these assumptions, the common zero set
\[ M_{\lambda}=\{h_1+\lambda d_1=\ldots=h_k+\lambda d_k=0\} \]
will be a submanifold contained in $\nu M=M\times D^k$
for $|\lambda|<\varepsilon$, given as the graph of a map
$M\rightarrow D^k$. In particular, $M_\lambda$ will be
an isotopic copy of~$M$.
\subsection{Degeneration of algebraic curves}
We now return to the specific situation of Section~\ref{section:buildings}.
We write $(z_1,z_2)=\rme^t q$ with $t\in\R$ and $q\in S^3\subset\C^2$.
Set
\[ g_k=f_k|_{S^3},\;\; k=0,\ldots, d,\]
where $f_0=1$. Then
\[ G^{\lambda}(t,q):=f^{\lambda}(\rme^t q)=
\sum_{\ell=0}^d \rme^{(d-\ell)t}\lambda^{\ell(\ell+1)/2} g_{d-\ell}(q).\]

The rescaling of $(z_1,z_2)$ by a constant factor amounts to a shift in
the $t$-coordi\-nate, so we set
\[ G^{\lambda}_{\mu}(t,q)=g^{\lambda}(t+\mu\log\lambda,q)=
\sum_{\ell=0}^d\rme^{(d-\ell)t}\lambda^{\mu(d-\ell)+\ell(\ell+1)/2}
g_{d-\ell}(q).\]
The choice $\mu=k$ corresponds to $f^{\lambda}_*$ with $c_{\lambda}=\lambda^k$
in the second proof of Theorem~\ref{thm:dg}.
\subsection{Convergence to a holomorphic building}
With this choice $\mu=k$ we want to get a quantitative understanding of
the convergence of the
rescaled function
\begin{equation}
\label{eqn:Gk}
\frac{G^{\lambda}_k(t,q)}{\lambda^{k(d-k)+k(k+1)/2}}=
\frac{1}{\lambda^{k(d-k)+k(k+1)/2}}
\sum_{\ell=0}^d \rme^{(d-\ell)t}\lambda^{k(d-\ell)+\ell(\ell+1)/2}
g_{d-\ell}(q)
\end{equation}
to
\begin{equation}
\label{eqn:survivor}
G^0_k(t,q)=\rme^{(d-k+1)t}g_{d-k+1}(q)+\rme^{(d-k)t}g_{d-k}(q)
\end{equation}
for $\lambda\rightarrow 0$. Notice that the summands in
(\ref{eqn:Gk}) that vanish in the limit
are of the form
\[ \lambda^m\rme^{(d-k+1+n)t}g_{d-k+1+n}(q)\;\;\;\text{or}\;\;\;
\lambda^m\rme^{(d-k-n)t}g_{d-k-n}(q)\]
with $m\geq n>0$. On any compact interval $t\in[-N,N]$, these
summands go uniformly to zero for $\lambda\rightarrow 0$, but we can do a
little better than that.

For large positive $t$, the first summand in (\ref{eqn:survivor})
dominates, so we consider the rescaled function
\begin{equation}
\label{eqn:G+}
G^+_k(t,q)=g_{d-k+1}(q)+\rme^{-t}g_{d-k}(q);
\end{equation}
for $t<0$ with $|t|$ large, we look at
\[ G^-_k(t,q)=\rme^tg_{d-k+1}(q)+g_{d-k}(q).\]

\begin{lem}
On $[0,-\frac{3}{4}\log\lambda]\times S^3$, the rescaled function
\[ \frac{G^{\lambda}_k(t,q)}{\lambda^{k(d-k)+k(k+1)/2}\,\rme^{(d-k+1)t}}\]
converges uniformly to $G^+_k(t,q)$ for $\lambda\rightarrow 0$.

On $[\frac{3}{4}\log\lambda,0]\times S^3$, the rescaled function
\[ \frac{G^{\lambda}_k(t,q)}{\lambda^{k(d-k)+k(k+1)/2}\,\rme^{(d-k)t}}\]
converges uniformly to $G^-_k(t,q)$ for $\lambda\rightarrow 0$.
\end{lem}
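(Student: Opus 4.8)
The plan is to expand the rescaled function as an explicit finite sum over $\ell$, pinpoint the two summands that survive the limit, and then dominate every remaining summand by trading the growth of the exponential factor $\rme^{\pm nt}$ on the (expanding) interval against the power of $\lambda$ that it carries. I would begin with the first assertion. Writing $P(\ell)=k(d-\ell)+\ell(\ell+1)/2$ and $P_0=k(d-k)+k(k+1)/2$, dividing $G^{\lambda}_k$ by $\lambda^{P_0}\rme^{(d-k+1)t}$ turns \eqref{eqn:Gk} into
\[
\frac{G^{\lambda}_k(t,q)}{\lambda^{P_0}\,\rme^{(d-k+1)t}}
=\sum_{\ell=0}^d \rme^{(k-1-\ell)t}\,\lambda^{P(\ell)-P_0}\,g_{d-\ell}(q).
\]
A direct computation gives $P(k-1)=P(k)=P_0$, so the two summands $\ell=k-1$ and $\ell=k$ contribute exactly $g_{d-k+1}(q)$ and $\rme^{-t}g_{d-k}(q)$, that is, the target $G^+_k$; everything else is error.

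Next I would estimate the error terms. Writing the remaining indices as $\ell=k-1-n$ or $\ell=k+n$ with $n\geq 1$, the same algebra that produced the exponents in Section~\ref{section:buildings} yields $P(\ell)-P_0=n(n+1)/2$ in both cases, while the exponential factor is $\rme^{nt}$ for $\ell=k-1-n$ and $\rme^{-(n+1)t}$ for $\ell=k+n$. On $[0,-\frac{3}{4}\log\lambda]\times S^3$ the second type is harmless: since $t\geq 0$ forces $\rme^{-(n+1)t}\leq 1$, that summand is $O(\lambda^{n(n+1)/2})=O(\lambda)$. For the first type I would invoke the right endpoint $t\leq-\frac{3}{4}\log\lambda$, i.e.\ $\rme^{nt}\leq\lambda^{-3n/4}$, so the summand is bounded by a constant times $\lambda^{n(n+1)/2-3n/4}=\lambda^{n(2n-1)/4}$. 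Because the functions $g_{d-\ell}$ are continuous on the compact sphere $S^3$, hence uniformly bounded, and because the sum over $\ell$ is finite, adding these finitely many bounds shows the error tends to $0$ uniformly. The second assertion, on $[\frac{3}{4}\log\lambda,0]\times S^3$ with the normalisation $\rme^{(d-k)t}$, follows by the mirror-image argument: the roles of the two error types are swapped, the $\ell=k-1-n$ terms now being trivially bounded (since $t\leq 0$) and the $\ell=k+n$ terms controlled through the left endpoint $t\geq\frac{3}{4}\log\lambda$, again producing the exponent $n(2n-1)/4$.

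The crux of the estimate is the inequality $n(n+1)/2>\frac{3}{4}n$ for all $n\geq 1$, which is $n(2n-1)/4>0$; this is what keeps every exponent of $\lambda$ strictly positive despite the fact that $\rme^{nt}$ blows up and the interval $[0,-\frac{3}{4}\log\lambda]$ has length tending to infinity. This is the main subtlety I anticipate: one is claiming \emph{uniform} convergence on a domain that itself grows as $\lambda\to 0$, so the convergence rate at the far endpoint must be reconciled with the power of $\lambda$. The constraint that makes the bookkeeping work is exactly that the cutoff factor be strictly less than $1$ (the binding case being $n=1$, where $n(n+1)/2-cn$ reads $1-c$). I would remark that the specific value $\frac{3}{4}$ is chosen in the window $(\tfrac{1}{2},1)$: the upper bound $c<1$ is forced by the convergence estimate above, while the lower bound $c>\tfrac{1}{2}$ guarantees that the intervals attached to consecutive values $\mu=k$ and $\mu=k+1$ overlap, which is what lets the limit surfaces patch together into the holomorphic building of Section~\ref{section:buildings}.
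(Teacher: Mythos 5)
Your proof is correct and follows essentially the same route as the paper: expand the finite sum, identify the two surviving summands $\ell=k-1,k$, and beat the exponential growth $\rme^{nt}$ on the expanding interval by the $\lambda$-power via $t\leq-\frac{3}{4}\log\lambda$ (the paper writes the error bound as $\lambda^m\rme^{nt}<\lambda^{m-3n/4}$ with $m\geq n>0$, which is your estimate with the exact value $m=n(n+1)/2$ left implicit). Your closing observation that the cutoff must lie in the window $(\tfrac{1}{2},1)$ --- below $1$ for the convergence, above $\tfrac{1}{2}$ so that consecutive levels overlap --- is a nice articulation of what the paper only uses tacitly.
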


\begin{proof}
For $t\in [0,-\frac{3}{4}\log\lambda]$ and $m\geq n>0$ as above,
we have
\[ \lambda^m \rme^{nt}<\lambda^{m-3n/4}\longrightarrow 0\]
for $\lambda\rightarrow0$. The other case is analogous.
\end{proof}

\begin{rem}
Notice that the domain of convergence increases as $\lambda$ gets smaller.
By uniform convergence we mean that for any $\varepsilon >0$
there is a $\lambda_0=\lambda_0(\varepsilon)$ such that for any
$\lambda<\lambda_0$ the function (\ref{eqn:G+}) is $\varepsilon$-close
to $G_k^+(t,q)$ for all $(t,q)\in[0,-\frac{3}{4}\log\lambda]\times S^3$,
similarly for the other case.
This statement remains true for any finite number of derivatives,
with a smaller $\lambda_0(\varepsilon)$.
\end{rem}

The considerations of Section~\ref{subsection:conv-submanifold}
now imply that for $\lambda$ sufficiently close to~$0$,
the curve
\[ C^{\lambda}\cap[k+\frac{3}{4}\log\lambda,k-\frac{3}{4}\log\lambda] \]
has the topology of $\bigl\{f_{d-k+1}+f_{d-k}=0\bigr\}$. The intervals
$[k+\frac{3}{4}\log\lambda,k-\frac{3}{4}\log\lambda]$ overlap for
adjacent~$k$, and similar considerations show that in the region of
overlap the topology of $C^{\lambda}$ is that of a collection of cylinders
over Reeb orbits.

This concludes the convergence argument in the second proof
of Theorem~\ref{thm:dg}.
\begin{ack}
This paper was initiated during an enjoyable stay at Schlo{\ss}
Rauischholzhausen, the conference centre of JLU
Gie{\ss}en. We thank the castle staff for creating an inspiring research
environment. We are grateful to Stefan Kebekus for useful correspondence
on algebraic curves, and to Jean Gutt for comments
on a draft version of this paper.
\end{ack}

\begin{thebibliography}{10}
%
\bibitem{abhs18}
\textsc{A. Abbondandolo, B. Bramham, U. L. Hryniewicz and P. A. S. Salom\~ao},
Sharp systolic inequalities for Reeb flows on the $3$-sphere,
\textit{Invent. Math.}
\textbf{211} (2018), 687--778.
%
\bibitem{agz18}
\textsc{P. Albers, H. Geiges and K. Zehmisch},
Pseudorotations of the $2$-disc and Reeb flows on the $3$-sphere,
\texttt{arXiv:1804.07129}.
%
\bibitem{behwz03}
\textsc{F. Bourgeois, Ya. Eliashberg, H. Hofer, K. Wysocki and
E. Zehnder},
Compactness results in symplectic field theory,
\textit{Geom. Topol.}
\textbf{7} (2003), 799--888.
%
\bibitem{geig08}
\textsc{H. Geiges},
\textit{An Introduction to Contact Topology},
Cambridge Stud. Adv. Math. \textbf{109},
Cambridge University Press, Cambridge (2008).
%
\bibitem{geig17}
\textsc{H. Geiges},
How to depict $5$-dimensional manifolds,
\textit{Jahresber. Dtsch. Math.-Ver.}
\textbf{119} (2017), 221--247.
%
\bibitem{gela18}
\textsc{H. Geiges and C. Lange},
Seifert fibrations of lens spaces,
\textit{Abh. Math. Sem. Univ. Hambg.}
\textbf{88} (2018), 1--22.
%
\bibitem{gost99}
\textsc{R. E. Gompf and A. I. Stipsicz},
\textit{$4$-Manifolds and Kirby Calculus},
Grad. Stud. Math. \textbf{20},
American Mathematical Society, Providence, RI (1999).
%
\bibitem{grha78}
\textsc{P. Griffiths and J. Harris},
\textit{Principles of Algebraic Geometry},
Wiley-Interscience, New York (1978).
%
\bibitem{harr92}
\textsc{J. Harris},
\textit{Algebraic Geometry -- A First Course},
Grad. Texts in Math. \textbf{133},
Springer-Verlag, Berlin (1992).
%
\bibitem{hart77}
\textsc{R. Hartshorne},
\textit{Algebraic Geometry},
Grad. Texts in Math. \textbf{52},
Springer-Verlag, Berlin (1977).
%
\bibitem{hwz98}
\textsc{H. Hofer, K. Wysocki and E. Zehnder},
The dynamics on three-dimensional strictly convex energy surfaces,
\textit{Ann. of Math.} (2)
\textbf{148} (1998), 197--289.
%
\bibitem{hwz03}
\textsc{H. Hofer, K. Wysocki and E. Zehnder},
Finite energy foliations of tight three-spheres and Hamiltonian dynamics,
\textit{Ann. of Math.} (2)
\textbf{157} (2003), 125--255.
%
\bibitem{hms15}
\textsc{U. Hryniewicz, A. Momin and P. A. S. Salom\~ao},
A Poincar\'e--Birkhoff theorem for tight Reeb flows on~$S^3$,
\textit{Invent. Math.}
\textbf{199} (2015), 333--422.
%
\bibitem{hrsa11}
\textsc{U. Hryniewicz and P. A. S. Salom\~ao},
On the existence of disk-like global sections for Reeb flows
on the tight $3$-sphere,
\textit{Duke Math. J.}
\textbf{160} (2011), 415--465.
%
\bibitem{hrsa18}
\textsc{U. Hryniewicz and P. A. S. Salom\~ao},
Global surfaces of section for Reeb flows in dimension three and beyond,
in \textit{Proceedings of the International Congress of Mathematicians}
(Rio de Janeiro, 2018), to appear.
%
\bibitem{hutc16}
\textsc{M. Hutchings},
Mean action and the Calabi invariant,
\textit{J. Mod. Dyn.}
\textbf{10} (2016), 511--539.
%
\bibitem{jane83}
{\sc M. Jankins and W. D. Neumann},
\textit{Lectures on Seifert Manifolds},
Brandeis Lecture Notes \textbf{2},
Brandeis University, Waltham, MA (1983);
available at\\
\verb+http://www.math.columbia.edu/~neumann/preprints/+
%
\bibitem{kirw92}
\textsc{F. Kirwan},
\textit{Complex Algebraic Curves},
London Math. Soc. Stud. Texts \textbf{23},
Cambridge University Press, Cambridge (1992).
%
\bibitem{koer19}
\textsc{O. van Koert},
A Reeb flow on the three-sphere without a disk-like global surface of section,
\texttt{arXiv:1902.01172}.
%
\bibitem{prso97}
\textsc{V. V. Prasolov and A. B. Sossinsky},
\textit{Knots, Links, Braids and $3$-Manifolds},
Transl. Math. Monogr. \textbf{154},
American Mathematical Society, Providence, RI (1997).
%
\end{thebibliography}
\end{document}